\begin{document}

\title{Product-Closing Approximation for \textcolor{blue}{Ranking-based} Choice Network Revenue Management}

\author[poly]{Thibault Barbier \corref{cor1}}
\ead{thibault.barbier@polymtl.ca}

\author[poly]{Miguel Anjos}
\ead{miguel-f.anjos@polymtl.ca}

\author[expr]{Fabien Cirinei} 
\ead{fabien.cirinei@expretio.com}

\author[poly]{Gilles Savard}
\ead{gilles.savard@polymtl.ca}

\cortext[cor1]{Corresponding author}

\address[poly]{Department of Industrial and Mathematical Engineering , Ecole Polytechnique Montreal, 2900 Edouard Montpetit Blvd, Montreal, QC H3T 1J4 Canada}
\address[expr]{ExPretio Technologies, 200 Avenue Laurier Ouest, Montreal, QC H2T 2N8, Canada}

\begin{abstract}

Most recent research in network revenue management incorporates choice behavior that models the customers' buying logic. 
These models are consequently more complex to solve, but they return a more robust policy that usually generates better expected revenue than an independent-demand model. 
Choice network revenue management has an exact dynamic programming formulation that rapidly becomes intractable.
Approximations have been developed, and many of them are based on the multinomial logit demand model. 
However, this parametric model has the property known as the independence of irrelevant alternatives and is often replaced in practice by a nonparametric model.
We propose a new approximation called the product closing program that is specifically designed for \textcolor{blue}{a ranking-based choice model representing a nonparametric demand}.
Numerical experiments show that our approach quickly returns expected revenues that are slightly better than those of other approximations, especially for large instances.
Our approximation can also supply a good initial solution for other approaches.
\end{abstract}

\begin{keyword}
revenue management, offer policy, \textcolor{blue}{ranking-based choice behavior}
\end{keyword}

\maketitle
\newtheorem{proposition}{Proposition}
\newtheorem{lemma}{Lemma}

\section{Introduction}
\label{productClosing:sec:introduction}

In 1978, when the US airline market was deregulated, airlines lost their quasi-monopolistic status, moving to a competitive market. 
They were forced to improve efficiency, in terms of both operation productivity and sales profitability.
Operation productivity optimization aims to improve the scheduling, maintenance, and assignment of limited resources.
Sales profitability optimization is a type of revenue management: it aims to maximize the revenue obtained from perishable resources. 
These issues are considered separately because the subproblems are tractable whereas the overall problem is too complex.
Today, scheduling and revenue management have many applications: airlines, rental car companies, and hotels. 

We focus on the revenue management problem for which perishable resources are sold through different products to customers during a reservation period.
Selling a low price product early consumes a resource that could perhaps have been sold later at a better price. However, holding on to resources for future sales fails to satisfy the current demand.

The challenge is thus to control the availability of products over the horizon period to maximize revenue.
A policy designates all the logics to control the availability of products throughout the entire reservation period.
The resources and products are parameters fixed by the user.
The forecast of the demand is not the subject of this article and is considered given.
The revenue management in this article refers to the problem of availability policy. 

Research has shown that it is better to optimize the network formed by the resources rather than each resource individually, but this leads to larger problems.
The latest trend in revenue management is the implementation of choice behavior instead of an assumption of independent demand.
The problem is more complex, but the solutions are more accurate and robust.
This version of revenue management is referred to as the choice network revenue management problem (CNRM).  
It was first introduced by \cite{gallego2004managing}, and an exact dynamic programming (DP) formulation was given by \cite{talluri2004revenue}.

However, the DP rapidly becomes intractable because of the number of states.
Researchers have therefore proposed various approximations, returning solutions that are either dynamic or static.
The quality of the approximation can be measured by the expected revenue and the solution time.
The most popular approximations are the choice deterministic linear program (CDLP) proposed by \cite{liu2008choice}, which is static, and DP decomposition by resources, which is dynamic.
For large instances and especially because of the choice behavior, these approximations are large and difficult to solve.
The multinomial logit (MNL) model as explained in \cite{ben1985discrete} and \cite{hanson1996optimizing}, which is widely used in the marketing and economics literature, is often used for the choice behavior.
Many methods such as column generation and heuristics have been developed for this model because its structure is well-accommodated for estimation and CNRM approximations.s

However, the MNL model has an important drawback known as the independence of irrelevant alternatives (IIA) as detailed in \cite{ratliff2008multi}.
IIA causes improbable substitutions when products share similar characteristics.
Unfortunately perfect substitutes, such as the red/blue bus example of \cite{ben1985discrete}, often occur in revenue management. 
Moreover, the data available for forecasting may better fit another demand model.
We focus on the preference list (PL) model which is \textcolor{blue}{one} nonparametric alternative to the multinomial logit.
\textcolor{blue}{It is also referred as the ranking-based model}.
In the former, customers choose from an ordered list of ranked products.
A probability of transition is specified between each pair of products.
Our work is motivated by the fact that most recent researches on choice behavior models have focused on PL estimation as in \cite{farias2013nonparametric}, \cite{van2014market} and more recently \cite{van2017expectation}.
On the other hand, there has been limited research into \textcolor{blue}{ranking-based} CNRM approximations, and most of the studies are adaptations of existing MNL approaches. 

Our approximation exploits the structure of the PL model and is not based on an existing approximation. 
By taking advantage of the logical transitions between products rather than working with sets of products as in MNL approximations, we avoid the extremely high number of product combinations.
This results in a nonlinear model that can easily be linearized, and the binary variables have a practical significance that can be exploited to provide good initial solutions.
The complexity of our model depends linearly on the number of products considered for each segment.
Unlike many other approximations, our formulation benefits from overlapping by reusing variables when different segments share products; this reduces the complexity.
We assume no-reopening: products are sold until a specified time and then never sold again.
Some companies have such a strategy, and most approximations model it via additional constraints that slow the solution process.
When reopening is allowed, our approximation can return a set of product durations that can serve as a good initial solution for an approximation that allows reopening.

Our experiments show promising results in comparison with other approximations.
Our approximation returns an equivalent or better expected revenue in a shorter solution time for all the instances, although there is no-reopening.
The results also demonstrate the time saved by using our solution as an initial solution for an approximation with reopening.
We also show the limitations of some current approximations for the largest instances, to highlight the practical feasibility of our approach.

The remainder of this paper is organized as follows.
In Section~\ref{productClosing:sec:relatedLiterature}, we review the CNRM literature, especially with \textcolor{blue}{ranking-based} choice behavior.
In Section~\ref{productClosing:sec:notationsAndPreviousApproaches}, we introduce the notation and give the exact formulation of CNRM.
In Section~\ref{productClosing:sec:closingApproximation}, we present our approximation with preference-list choice behavior and its theoretical properties.
In Section~\ref{productClosing:sec:solvingThePCP}, we present practical methods for the efficient solution of our approximation.  
Numerical experiments and approximation benchmarks are reported in Section~\ref{productClosing:sec:numericalExperiments}, and
Section~\ref{productClosing:sec:conclusion} provides concluding remarks. 

\section{Related literature} 
\label{productClosing:sec:relatedLiterature}

We refer to \cite{talluri2006theory} for reviews of the historical revenue management problem with or without the network and choice aspects.
\cite{strauss2018review} presents the most recent researchs on the general revenue management with choice behavior.
We focus on the CNRM problem and discuss only the relevant literature.

As mentioned in the Introduction, this problem has an exact DP formulation \cite{talluri2004revenue}.
Because it rapidly becomes intractable, approximations have been proposed in two categories: static and dynamic.

The static approximations are based on the expected demand.
They therefore reduce the complexity and can tackle larger instances but ignore the demand uncertainty.
The solution obtained is not updated in response to new arrivals and is hence called static.  
In this category, CDLP \cite{liu2008choice} is the most widely used.
It indicates for how long each set of products, also called an offer, must be sold over the reservation period.
By empirically ordering the offers and their durations over the reservation period, we obtain a static policy by offer period.
The CDLP is an upper bound on the DP solution and is asymptotically equivalent as resources and demand increase.
However, it has an exponential number of columns and must be solved by column generation, which has an NP-hard subproblem, as explained by \cite{bront2009column} and \cite{rusmevichientong2014assortment}.
\cite{liu2008choice} and \cite{bront2009column} propose exact and heuristic subproblem formulations for the MNL choice behavior.

The CDLP primal solution has to be ordered and gives a static policy.
\cite{liu2008choice} and \cite{bront2009column} use the optimal dual values to calculate the capacity marginal values in a DP decomposition by resource.
\cite{zhang2016dynamic} and \cite{erdelyi2011using} are other approximations for the calculus of the network marginal values. 
In the same vein \cite{kunnumkal2010new} uses the revenue attributed to each resource rather than dual values.
The dynamic policy obtained indicates what offer to propose as a function of the remaining time and capacities.
However, this approach needs to solve an NP-hard problem for each resource, each time, and each remaining capacity and can therefore be intractable even if computed offline.
Moreover, an NP-hard problem must be solved for each arrival to determine what offer to propose.
This may be incompatible with current reservation systems.

\cite{talluri2010randomized} proposed the segment-based deterministic concave program (SDCP), considered as a CDLP decomposition by segment. 
It is more tractable if the consideration sets are not too large, but it also provides a weaker upper bound than CDLP unless the segments do not overlap, which is rare in practice. 
To tighten the SDCP formulation with choice behavior, \cite{meissner2013enhanced} add valid constraints referred to as product cuts, \cite{talluri2014new} uses a random customer-arrival stream and \cite{strauss2017tractable} proves an equivalence with CDLP when the intersection of segment consideration forms a tree or consideration sets are nested.
However, even with the extra constraints, no primal policy is returned and the dynamic decomposition is the principal solution. 
The sales-based linear program (SBLP) introduced by \cite{gallego2014general} and developed further by \cite{talluri2014new} is a compact formulation of the SDCP under the MNL choice behavior.
	
\textcolor{blue}
{
	Previous researches focus on parametric choice model.
	\cite{chaneton2011computing} was one of the first article on the CNRM policy problem with a ranking-based choice behavior. 
	They formulated a continuous demand and capacity model solved with a stochastic gradient descent.
	Stochastic gradient descent performance largely depends on the initial solution and the stop criterion.
	Therefore, this approach does not ensure a good solution for any instance.
	Later, \cite{hosseinalifam2016computing} developed a subproblem for the CDLP column generation problem that tackle a ranking-based choice model. 
	However this subproblem is an integer program difficult to solve for larger instances.
	More recently, \cite{paul2018assortment} solve a tree choice model for the assortment problem.
	They propose a polynomial time algorithm to solve a dynamic program under some assumptions.
	The recent advances on \textcolor{blue}{ranking-based} choice behavior forecast, such as \cite{farias2013nonparametric}, \cite{van2014market},  \cite{jagabathula2017nonparametric} and \cite{van2017expectation}, open the way to new researches and approaches for the CNRM policy problem under this choice behavior. 
}

\section{Notations and previous approaches} 
\label{productClosing:sec:notationsAndPreviousApproaches}

\subsection{Notations}
\label{productClosing:subsec:notations} 

We start by introducing the notation for the CNRM problem.
A resource $i \in I$ has a capacity $c_i$. There is $m=|I|$ resources.
A product $j \in J$ is defined by a fare $r_j$ and consumed one or more resources. There is $n=|J|$ products.
An offer $S \subseteq J$ denotes a set of distinct products.
The incidence matrix $A=[a_{ij}]_{i \in I, j \in J}$ has $a_{ij}$ equal to $1$ if resource $i$ is used by product $j$ and $0$ otherwise.
$A_j$ refers to the column of product $j$ in the incidence matrix.
Customers arrive during a reservation period, indexed by $t$, starting at $t=0$ and finishing at $t=\tau$ when the resources perish.
A segment $l \in L$ groups the customers with identical choice behavior aiming to buy products $C_l \subseteq J$, also called the consideration set and containing $n_l$ products.
These customers arrive over the reservation period according to a Poisson process with a uniform ratio $\lambda_l$. 
The choice behavior is defined by the probability $\mathsf{P}_l(j|S)$ that segment $l$ buys product $j$ among the offer $S\subseteq J$.
We focus on preference-list \textcolor{blue}{also called ranking-based} choice behavior, which is nonparametric. 
It is characterized by distinct ordered products indexed by $l_j \in [1,n_l]$ or $l_j =0$ if $j \not \in C_l$.
The product $l^k \in C_l$ is the $k^{th}$ product of the preference list of segment $l \in L$.
The subset $C_l^{[k]} \subseteq C_l$ with $k \in [1, n_l]$ corresponds to the preference list limited to the first $k$ products.
A transition $\theta^{k\text{-}1,k}_l$ with $k \in [1, n_l]$ reflects the ratio of customers passing from one product to the next in the preference list.
By convention, we fix $\theta^{0,1}_l = 1$.
Customers always choose a product according to the order defined by the preference list.
We therefore have:

\begin{equation*}
\mathsf{P}_l(j|S) = 
\left\{
\begin{array}{ll}
\prod\limits_{k=1}^{l_j} \theta_l^{k\text{-}1,k}	& \text{if } S \cap C_l^{[l_j]} = \{j\} \\
0								& \text{otherwise.}
\end{array}
\right. 
, \quad
\forall l \in L, \; j \in J, \; S\subseteq J.
\end{equation*}

We often shorten the preference-list notation to $l^1 \xrightarrow{\theta^{1,2}_l} l^2 \xrightarrow{\theta^{2,3}_l} \dots \xrightarrow{\theta^{n_l\text{-}1,n_l}_l} l^{n_l}$.

To be noted that the transitions allow us to merge preference lists without transitions.
For example $2: u \rightarrow v$ and $3: u \rightarrow v \rightarrow z$ can be merged as $5: u \xrightarrow{1} v \xrightarrow{3/5} z$.	

\emph{Running example:} Let consider three products $u$, $v$, $w$ of respective price $15$, $25$ and $40$.
$u$ and $v$ consume the same leg of capacity $c_1 =1$ and $w$ another leg of capacity $c_2=1$. The reservation period horizon is $\tau=1$.
Only one segment $l$ arrives at a ratio $\lambda_l=3$ and has preference list $u \xrightarrow{0.9} v \xrightarrow{0.8} w$.

To illustrate the preference-list choice behavior, imagine we offer $S=\{v,w\}$ to the \emph{running example} segment.
Then the arrival ratio $\mathsf{P}_l(u|S) = 0$ because $u$ is not offered ($u \not \in S$).
For the second choice, $\mathsf{P}_l(v|S) = 0.9$ because $v$ is the first offered choice ($S \cap C_l^{[2]} = \{v\}$).
Finally $\mathsf{P}_l(w|S) = 0$ because a preferred choice is offered ($S \cap C_l^{[3]} = \{v,w\} \ne \{w\}$).

\subsection{Dynamic programming formulation} 
\label{productClosing:subsec:dynamicProgramFormulation} 

The CNRM problem can be formulated exactly as a DP.
We choose a step size $\delta t$ sufficiently small that there is at most one arrival between $t$ and $t+\delta t$.
We also introduce $x$, the vector of remaining capacities ($x=c$ when $t=0$).
The Bellman equations can then be written as follows:

\begin{align*}
\label{productClosing:eq:DP}			
\tag{DP}
\mathsf{V}(t\text{+}\delta t,x) =&\; \mathsf{V}(t, x) \\
		&+ \max_{S \subseteq J(x)} \sum_{l \in L} \lambda_l \sum_{j\in C_l}\mathsf{P}_l\left(j|S\right)\left(r_j -\Delta \mathsf{V}_j(t\text{+}\delta t, x)\right)\delta t 
\end{align*}

where $\Delta \mathsf{V}_j(t,x) = \mathsf{V}(t,x) - \mathsf{V}(t, x\text{-}A_j)$ is the opportunity cost of selling product $j$ at time $t$.
$J(x)$ is the set of products with remaining resource capacity.
The boundary conditions are:  
\begin{align*}
\mathsf{V}(t,0) = 0, \quad  		&\forall t \in [0,\tau],\\
\mathsf{V}(\tau+\delta t, x) = 0, \quad  	& 0\le x \le c.
\end{align*}

\textcolor{blue}
{
The optimal policy, denoted by $S^\star(t,x)$, for deciding the availability of each product over the reservation period is formed by the maximization problems solution of $S$ at each time and for each remaining capacity in the previous Equation \ref{productClosing:eq:DP}.
We obtain it by comparing each product price to the opportunity cost of reducing resource required for selling the product:
\begin{equation*}
	S^\star(t,x) = \left\{j \in J \; | \; r_j \ge \mathsf{V}(t+\delta t, x) - \mathsf{V}(t+\delta t, x-A_j)\right\}
\end{equation*}
}
\textcolor{blue}{This policy revenue is the optimal revenue and is denoted by $ \mathsf{V^{\star}}$}.
Unfortunately, this DP rapidly becomes intractable as the size of the network increases.
Even an instance with only ten resources of capacity 100 has $100^{10}$ states.
The CNRM problem must therefore be solved approximately.

\subsection{Static approximations}
\label{productClosing:subsec:staticApproximations}

We first consider static approximations.
They avoid the discrete customer-arrival complexity of the DP by considering a continuous and deterministic flow of customers.
The most popular static approximation is the CDLP \citep{gallego2004managing} based on durations $\mathsf{D} =\{\mathsf{D}_S\}_{S\subseteq J}$ indicating for how much time each offer should be available.
The vector of products sales is denoted $\mathsf{Q}$. For the CDLP, we have for each product:
\begin{equation*}
\mathsf{Q}_{\text{\tiny CDLP}, j} (\mathsf{D})
= \sum_{S\subseteq J}\sum_{l \in L} \lambda_l \mathsf{P}_l(j|S) \mathsf{D}_S
\end{equation*}
Such that:

\begin{align*}
\label{productClosing:eq:CDLP}
\tag{CDLP}
\mathsf{R}^\star_\text{\tiny CDLP} (\mathsf{D^\star}) = \max_\mathsf{D} \quad	
	& r^\top  \mathsf{Q}_{\text{\tiny CDLP}} (\mathsf{D})\\ 
s.t. \quad					
	& A \mathsf{Q}_{\text{\tiny CDLP}}  (\mathsf{D})  \le c, \quad (\pi) \\ 
	& \sum_{S\subseteq J} \mathsf{D}_S \le \tau,															\\
	& \mathsf{D}_S \ge 0, 	\quad \forall S \subseteq J.
\end{align*}

The objective function maximizes the revenue, the first constraint ensure that the capacities are respected and the second constraint prevents from selling more than the reservation period.

The CDLP optimal solution for the \emph{running example} is $\mathsf{D}^\star_{\{v,w\}}=0.370$ and $\mathsf{D}^\star_{\{w\}}=0.463$.
Note that the CDLP suffers from degeneracy especially for preference-list choice behavior. 
In fact, $\mathsf{D}^\star_{\{v\}} = 0.370$ and $\mathsf{D}^\star_{\{w\}}=0.463$ is another optimal solution.	

We must now determine a policy to control the availability of products throughout the reservation period
An immediate policy for any static approximation is the product booking (PB) policy that limit the sales $\mathsf{Q}_j$ for each product to its optimal sales $\mathsf{Q}_j^\star$:

\begin{equation}
\label{productClosing:eq:policyPB}
\tag{PB policy}
S^\text{PB}(t,x) = \{j \in J \; | \; \mathsf{Q}_j \le \mathsf{Q}_j^\star \}, \quad \forall t \in [0,\tau], \; x \le c.
\end{equation}

This policy is therefore static because it is fixed for the entire reservation period.

For the \emph{running example}, the optimal sales are $\mathsf{Q}^\star_v=\mathsf{Q}^\star_w=1$ and $\mathsf{Q}^\star_u=0$.
The PB policy ensures to accept at most one booking for $w$ and $v$.

Practitioners derive the offer period (OP) policy by ordering the $\eta$ offers with non-null duration over the reservation period, such that:

\begin{equation*}
\label{productClosing:eq:policyOP}
\tag{OP policy}
S^\text{OP}(t,x) = \left\{j \in J\; | \; k \in [0, \eta], \; j \in S_{k}, \; t \in [t_k, t_{k+1}[ \right\}, \quad \forall t \in [0,\tau], \; x \le c.
\end{equation*}
Where $k$ define how offers with non-null duration are offered and $t_k = \sum_{\gamma=0}^{k} \mathsf{D}_{S_\gamma}$ the time $S_k$ starts to be offered.

The different orders are equivalent in theory.
As the PB policy, it does not change over the reservation period and is thus static.

For the \emph{running example} we can offer $\{w\}$ then $\{v,w\}$ or inversely.
For the first option, $\{v,w\}$ will be offered from $0$ to $0.463$, then $\{w\}$ from $0.463$ ro $0.463+0.370=0.833$.	

To be noted that we can re-optimized any static approximation several times over the reservation period to obtain a more ``dynamic" PB or OP policy.

\subsection{Dynamic approximations}
\label{productClosing:subsec:dynamicApproximations}

The second type of approximations estimates the pseudo-revenue $r_j - \Delta \mathsf{V}_j(t+h,x)$ of each product without solving the entire DP.
Most of these approaches implement a decomposition by resource to reduce the number of states.
For example, \cite{bront2009column} approximate the network value function for resource $i$ as:
\begin{equation*}
\tag{DCOMP}
\mathsf{V}(t, x) \approx \mathsf{V}_i(t,x_i) + \sum\limits_{k \not= i} \pi^\star_{k} x_k
\label{productClosing:eq:DCOMP}
\end{equation*}
where the dual prices $\pi^\star_k$ come from the optimal solution of a static approximation.
By substituting this expression into the DP we obtain one DP per resource for the calculation of  $\mathsf{V}_i(t,x)$.
The network opportunity cost $\Delta\mathsf{V}_j(t,x)$ can then be approximated by $\Delta \widetilde{\mathsf{V}}_j(t,x)$ based on the decompositions by resource, for example \citep{bront2009column}:
\begin{equation*}
\Delta \mathsf{V}_j(t,x) \approx \Delta \widetilde{\mathsf{V}}_j(t,x) = \sum_{\substack{i \in I \\ a_{ij} = 1}} \beta \Delta \mathsf{V}_i(t,x_i) + (1-\beta) \pi_i^\star.
\end{equation*}
Where $\Delta \mathsf{V}_i(t,x)= \mathsf{V}_i(t,x_i) - \mathsf{V}_i(t,x_i\text{-}1)$ and $0 \le \beta \le 1$ is a parameter to fine-tune.
Other approximations have been proposed by \cite{zhang2016dynamic} and \cite{erdelyi2011using}.
Similarly to the \ref{productClosing:eq:DP}, the policy for the products availability over the reservation period is called the offer dynamic (OD) and is obtained as follows:

\begin{align*}
\label{productClosing:eq:policyOD}
\tag{OD policy}
&S^\text{OD} (t,x)= \\ & \arg\max_{S \subseteq J(x)} \sum_{l \in L} \lambda_l \sum_{j\in S}\mathsf{P}_l\left(j|S\right) \left(r_j -\Delta \widetilde{\mathsf{V}}_j(t\text{+}\delta t,x) \right)  \delta t,  \quad \forall t \in [0,\tau], \; x \le c.  
\end{align*}

This approach is dynamic because it changes depending on the arrivals.

\section{Closing approximation} 
\label{productClosing:sec:closingApproximation}

We propose a new static approximation for the CNRM problem under non-parametric choice behavior.
Our approximation is based on a new policy, which we call product closing (PC), that is suitable for use with a  preference list. 
It determines the time $\mathsf{T}_j \in [0,\tau]$ when each product becomes unavailable such that the policy is:
\begin{equation*}
\label{productClosing:eq:policyPC}
\tag{PC policy}
S^\text{PC}(t,x) = \{j \in J \; | \; t \le \mathsf{T}_j \}, \quad \forall t \in [0, \tau], \; x \le c. 
\end{equation*}	
In other words, it closes the sale of the product at this time. 

The fact that products can not be sold again after being removed from sales is called no-reopening and is defined as follows:
\begin{equation*}
\tag{No-reopening}
\text{No-reopening} \Leftrightarrow \forall j \in  S(t,x), \; t \in [0,\tau] \; \vert \; \forall \xi \in [0,t], \; j \in S(\xi,x)
\end{equation*}
A no-reopening policy is sometimes mandatory in practice.
The PC policy naturally prohibits reopening, whereas OP and OD do not if no constraints are added.
For the \emph{running example}, the CDLP optimal solution reopens sales of product $v$ even if not offered before.

\subsection{Buying logic under closing policy} 
\label{productClosing:subsec:buyinglogicunderclosingpolicy}

To determine the product sales generated by a PC policy, we start by calculating for how long each segment buys each of its choices.
We first note that the $k^{th}$ choice in a preference list is bought provided the corresponding product is available, and the products of the previous choices are not available.
To explain the buying logic driven by the PC policy, we consider the \emph{running example} segment and do not consider the legs capacity issue for the moment.
In Figure \ref{productClosing:fig:buyingLogicExamples} we illustrate two cases (a) and (b) of buying logic depending on the PC times for the segment.
\begin{figure}[h!]
\centering
\includegraphics[max width=\textwidth]{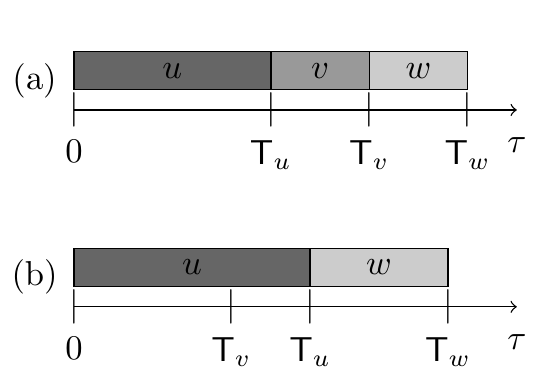}
\caption{Buying logic examples for a segment with preference list $u \xrightarrow{.9} v \xrightarrow{.8} w$. \label{productClosing:fig:buyingLogicExamples}}
\end{figure}
In case (a), the order is $\mathsf{T}_u \le \mathsf{T}_v \le \mathsf{T}_w$, i.e., the segment buys $u$ during $\mathsf{T}_u$, then $v$ during $\mathsf{T}_v-\mathsf{T}_u$, and finally $w$ during $\mathsf{T}_w - \mathsf{T}_v$.
In case (b), the order is $\mathsf{T}_v \le \mathsf{T}_u \le \mathsf{T}_w$, i.e., the segment buys $u$ during $t_u$ and then $w$ during $\mathsf{T}_w - \mathsf{T}_u$ because choice $v$ is covered by choice $u$ as a consequence of $\mathsf{T}_v \le \mathsf{T}_u$.

To generalize the buying logic, we note that the $k^{th}$ choice of a segment is bought if and only if its PC $\mathsf{T}_{l^k}$ is greater than the PCs $\mathsf{T}_{l^\gamma}$ of the previous choices $\gamma\in[1,k[$.
If this condition is satisfied, the choice is bought during the maximum closing $\max\limits_{\gamma \in[1,k\text{-}1]} \; \mathsf{T}_{l^\gamma}$ of the previous choices and its PC $\mathsf{T}_{l^k}$.
We can therefore determine the sales duration $\mathsf{D}_l^k$ for each segment $l$ and choice $k$ as follows:
\begin{equation}
\label{productClosing:eq:salesDuration1}
\mathsf{D}_l^k = \left(\mathsf{T}_{l^k} - \max_{\gamma \in[1,k\text{-}1]} \; \mathsf{T}_{l^\gamma} \right)^+ , \quad  \forall k\in [1,n_l], \; l \in L.
\end{equation}
If we apply this formula to the above example, we find the same durations.
Let $\mathsf{T}_l^k = \max\limits_{\gamma \in[1,k]} \; \mathsf{T}_{l^\gamma} = \max\limits_{j \in C_l^k} \; \mathsf{T}_j$ denote the maximum closing time of the first $k$ products.
With this notation we can reformulate (\ref{productClosing:eq:salesDuration1}) equivalently as:  
\begin{equation}
\label{productClosing:eq:salesDuration2}
\mathsf{D}_l^k = \mathsf{T}_l^k - \mathsf{T}_l^{k\text{-}1}, \quad \forall k\in [1, n_l], \; l \in L.
\end{equation}
We denote by $\mathsf{D}_L = \{\mathsf{D}_l^k\}_{l\in L, k \in[1,n_l]}$ and $\mathsf{T}_L = \{\mathsf{T}_l^k\}_{l\in L, k \in[1,n_l]}$ respectively the segments choice durations and closing times. 

\subsection{Product closing program}
\label{productClosing:subsec:productClosingProgram} 

The quantity that the segment buys is then obtained by multiplying the duration defined in (\ref{productClosing:eq:salesDuration2}) by the buying probability as defined in Section~\ref{productClosing:subsec:notations}, such that:
\begin{equation*}
\mathsf{Q}_{\text{\tiny PCP}}(\mathsf{D_L})_j = \sum\limits_{l \in L}\lambda_l \sum\limits_{j \in C_l} \mathsf{P}_l(j|\{j\}) \mathsf{D}_l^{l_j}
\end{equation*}
We can then write the PC program (PCP) as the following static approximation: 
\begin{align*}
\label{productClosing:eq:PCP}
\tag{PCP}
\mathsf{R}^\star_\text{\tiny PCP} (\mathsf{T^\star}) = \max_\mathsf{T}  \quad
	& r^\top \mathsf{Q}_{\text{\tiny PCP}} (\mathsf{D}_L)	\\
s.t. \quad	
	&A \mathsf{Q}_{\text{\tiny PCP}} (\mathsf{D}_L) \le c,	&	\\
&\mathsf{D}_l^k  = \mathsf{T}^k_l - \mathsf{T}^{k\text{-}1}_l,		& \forall k \in [1,n_l], \; l \in L	,		\\
	&\mathsf{T}^k_l = \max\limits_{j \in C^{[k]}_l} \mathsf{T}_j			& \forall k \in [1,n_l], \; l \in L			\\
	&\mathsf{T}_j \in [0,\tau]	 										& \forall j \in J
\end{align*}

The objective function maximizes the revenue, the first constraint ensure that the capacities are respected, the second determines sales duration with segment choice closings and the third links these latter to product closings.	

For the \emph{running example}, the optimal product closing policy is $\mathsf{T}^\star_u =0$, $\mathsf{T}^\star_v =0.370$ and $\mathsf{T}^\star_w =0.832$.	
It is thus less subject to degeneracy than CDLP because it depends on products rather than offers.

Contrarily to the CDLP introduced in section \ref{productClosing:subsec:staticApproximations}, the PCP do not directly returns the capacity dual variables ($\pi$) because it is a mixed integer program.
To use dynamic approximations, we approximate them by solving the PCP $m$ times to calculate the finite difference $\pi_i \approx \mathsf{R}^ \star_\text{\tiny PCP}(c_i+1) - \mathsf{R}^\star_\text{\tiny PCP}(c_i)$. 
	
\textcolor{blue}
{
	Provided that the $\sum_{l \in L} n_l-1$ choice closing time variables can be found easily, the PCP better suits ranking-choice model contrarily to the CDLP with its $2^n-1$ set duration variables.
}

\textcolor{blue}
{
	Almost all choice models can be turned into a ranking-based choice model by enumerating every choice path. A multinomial logit segment interested in products u, v and w with respective weights 2, 5 and 1 corresponds for example to the tree of Figure~\ref{fig:enumeratingPaths} (weight for quitting is 0) . 
	\begin{figure}[h!]
		\centering
		\begin{tikzpicture}[grow=right, sloped, level 1/.style={sibling distance=10mm},level 2/.style={sibling distance=6mm}]
		\node {S}
		child 
		{
			node {u}        
			child 
			{
				node {v}	
				child 
				{
					node {w}	
					edge from parent 
					node[midway,fill=white]  {\tiny 1.0}				
				}	
				edge from parent
				node[midway,fill=white]  {\tiny 0.83}		
			}
			child {
				node {w}
				child 
				{
					node {v}
					edge from parent
					node[midway,fill=white]  {\tiny 1.0}					
				}	
				edge from parent
				node[midway,fill=white]  {\tiny 0.17}
			}
			edge from parent 
			node[midway,fill=white]  {\tiny 0.25}
		}
		child 
		{
			node {v}        
			child 
			{
				node {u}	
				child 
				{
					node {w}
					edge from parent
					node[midway,fill=white]  {\tiny 1.0}					
				}	
				edge from parent
				node[midway,fill=white]  {\tiny 0.67}		
			}
			child {
				node {w}
				child 
				{
					node {u}
					edge from parent
					node[midway,fill=white]  {\tiny 1.0}					
				}	
				edge from parent
				node[midway,fill=white]  {\tiny 0.33}
			}
			edge from parent 
			node[midway,fill=white]  {\tiny 0.38}
		}
		child 
		{
			node {w}        
			child 
			{
				node {v}	
				child 
				{
					node {u}
					edge from parent
					node[midway,fill=white]  {\tiny 1.0}					
				}	
				edge from parent
				node[midway,fill=white]  {\tiny 0.71}		
			}
			child {
				node {u}
				child 
				{
					node {v}
					edge from parent
					node[midway,fill=white]  {\tiny 1.0}					
				}	
				edge from parent
				node[midway,fill=white]  {\tiny 0.29}
			}
			edge from parent 
			node[midway,fill=white]  {\tiny 0.12}
		};	
		\end{tikzpicture}
		\caption{Choice paths of a multinomial logit segment interested in u, v and w.}
		\label{fig:enumeratingPaths}
	\end{figure}
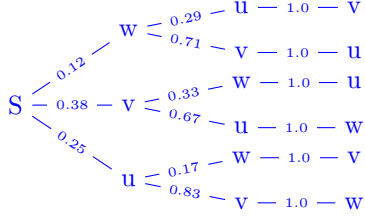	
	The PCP will not be as efficient because the number of choice paths grows exponentially. 
	However, iterative and pruning techniques, that are not presented in this article, can be used to solve this choice models more properly with the PCP. 
}

	\textcolor{blue}
{
	The PCP supports heterogeneous arrival rates by splitting the reservation period and adding constraints with integer variables.
	For example, we can formulate $D^l_{t,k}=\max\left(0, \min(D^k_l,e_t)-s_t\right)$ as the sales duration during period $t$ starting at $s_t \in [0,\tau]$ and ending at $e_t \in [0,\tau]$.
	This multi-period version is thus longer and more complex to solve as for the CDLP where the column generation problem must be adapted to tackle periods.
	We prefer to use the mono-period version with re-optimization as introduced in Section~\ref{productClosing:subsec:staticApproximations} to refine the solution with updated homogeneous arrival rates.
}

\textcolor{blue}
{
	We presented the PB policy in Section~\ref{productClosing:subsec:staticApproximations} with products that are partitioned but in practice they are often nested to protect sales of higher revenue.
	The PCP approach suits with the nesting concept because the respective availability of products can easily be forced.
	For example the linear constraint  $t_{u} \ge t_{v} \ge t_{w}$ ensure the nesting $u > v > w$.
}

\subsection{Reopen the policy} 
\label{productClosing:subsec:reopenThePolicy}

By ranking products by increasing closing time, we define a hierarchy $\mathsf{H} = \{\mathsf{H}_j\}_{j \in J}$ such that $\mathsf{H}_u > \mathsf{H}_v$ if $\mathsf{T}_u > \mathsf{T}_v$ and  $\mathsf{H}_u = \mathsf{H}_v$ if $\mathsf{T}_u = \mathsf{T}_v$.	
If $n_\mathsf{T}$ is the number of distinct closing times then $\mathsf{H}_j \in [1,n_\mathsf{T}]$ is the rank of product $j \in J$, $\mathsf{H}^k \subseteq J$ contains all the products of rank $k \in [1, n_\mathsf{T}]$ and $\mathsf{T}^k_\mathsf{H}$ is the closing time of rank $k \in [1, n_\mathsf{T}]$ products.	 

Any closing times $\mathsf{T}$ has a unique hierarchy and thus has only one equivalent offers duration denoted by $\mathsf{D}(\mathsf{T)}$. In fact there is $n_\mathsf{T}$ non null durations $\mathsf{D}_k = \mathsf{T}_\mathsf{H}^k - \mathsf{T}_\mathsf{H}^{k\text{-}1}$ with $k \in [1, n_\mathsf{T}]$ and $S_k = \bigcup_{\gamma = k}^{n_\mathsf{T}} \mathsf{H}^\gamma$.
So that any PCP solution $\mathsf{T}$ can be evaluated in the CDLP by using $\mathsf{D}(\mathsf{T})$.
 
The durations $\mathsf{D}(\mathsf{T})$ preserve the PCP no-reopening.
The PCP can not use reopening possibility and is thus a lower bound to the CDLP:

\begin{proposition}	
	\label{productClosing:proposition:CDLPUpperBoundOfPCP}
	$\mathsf{R}^\star_{\text{\rm PCP}} \le \mathsf{R}^\star_{\text{\rm CDLP}}$
\end{proposition}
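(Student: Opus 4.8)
The plan is to exhibit a value-preserving map from feasible \ref{productClosing:eq:PCP} solutions into feasible \ref{productClosing:eq:CDLP} solutions; since \ref{productClosing:eq:CDLP} maximizes over a feasible set that contains all such images, its optimum must dominate. The map is exactly the one constructed just above the statement: given closing times $\mathsf{T}$, form the hierarchy and the induced offer durations $\mathsf{D}(\mathsf{T})$, which assign positive duration only to the nested offers $S_1 \supseteq S_2 \supseteq \dots \supseteq S_{n_\mathsf{T}}$, with $\mathsf{D}_k = \mathsf{T}_\mathsf{H}^k - \mathsf{T}_\mathsf{H}^{k\text{-}1}$ (setting $\mathsf{T}_\mathsf{H}^0 = 0$) and zero duration on every other offer.

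First I would check the two \ref{productClosing:eq:CDLP}-specific constraints for $\mathsf{D}(\mathsf{T})$. Non-negativity is immediate because the hierarchy orders the distinct closing times increasingly, so $\mathsf{T}_\mathsf{H}^k \ge \mathsf{T}_\mathsf{H}^{k\text{-}1}$. The horizon constraint follows by telescoping, $\sum_{k=1}^{n_\mathsf{T}} \mathsf{D}_k = \mathsf{T}_\mathsf{H}^{n_\mathsf{T}} \le \tau$, since every product closing time lies in $[0,\tau]$.

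The crux, and the step I expect to be the main obstacle, is to show the two programs produce the \emph{same} sales vector, i.e. $\mathsf{Q}_{\mathrm{CDLP}}(\mathsf{D}(\mathsf{T})) = \mathsf{Q}_{\mathrm{PCP}}(\mathsf{D}_L)$. I would prove this through a common time-integral representation. Let $S(t) = \{i \in J : \mathsf{T}_i \ge t\}$ be the products still open at time $t$; the natural sales of choice $l^p$ by segment $l$ are $\lambda_l \int_0^\tau \mathsf{P}_l(l^p \mid S(t))\,dt$. On the one hand, $S(t)$ is constant and equal to $S_k$ on each interval $(\mathsf{T}_\mathsf{H}^{k\text{-}1}, \mathsf{T}_\mathsf{H}^k]$ of length $\mathsf{D}_k$, so the integral equals $\sum_{k=1}^{n_\mathsf{T}} \mathsf{P}_l(l^p \mid S_k)\,\mathsf{D}_k$, which is precisely the \ref{productClosing:eq:CDLP} sales evaluated at $\mathsf{D}(\mathsf{T})$. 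On the other hand, by the definition of the preference-list probability, $\mathsf{P}_l(l^p \mid S(t))$ equals the cumulative transition $\prod_{i=1}^{p}\theta_l^{i\text{-}1,i}$ exactly when $S(t) \cap C_l^{[p]} = \{l^p\}$, that is when $l^p$ is open ($t \le \mathsf{T}_{l^p}$) while every earlier choice is closed ($t > \max_{i<p}\mathsf{T}_{l^i} = \mathsf{T}_l^{p\text{-}1}$), and is zero otherwise. This event is the interval $(\mathsf{T}_l^{p\text{-}1}, \mathsf{T}_{l^p}]$, whose length is $(\mathsf{T}_{l^p} - \mathsf{T}_l^{p\text{-}1})^+ = \mathsf{T}_l^p - \mathsf{T}_l^{p\text{-}1} = \mathsf{D}_l^p$, so the integral equals $\prod_{i=1}^{p}\theta_l^{i\text{-}1,i}\,\mathsf{D}_l^p$, which is precisely the \ref{productClosing:eq:PCP} sales. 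The care needed here is in matching the positive-part/maximum bookkeeping of $\mathsf{D}_l^p$ with the ``only $l^p$ open among the first $p$ choices'' event, and in handling the degenerate cases where several products share a closing time (so a rank groups more than one product).

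With the sales vectors identical, the \ref{productClosing:eq:CDLP} capacity constraint $A\mathsf{Q}_{\mathrm{CDLP}}(\mathsf{D}(\mathsf{T})) \le c$ holds if and only if the \ref{productClosing:eq:PCP} capacity constraint does, and the two objectives $r^\top \mathsf{Q}$ coincide. Applying the map to an optimal $\mathsf{T}^\star$ therefore yields a \ref{productClosing:eq:CDLP}-feasible point of value $\mathsf{R}^\star_{\text{PCP}}$, whence $\mathsf{R}^\star_{\text{PCP}} \le \mathsf{R}^\star_{\text{CDLP}}$.
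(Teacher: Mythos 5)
Your proof is correct and follows essentially the same route as the paper: you use the very same hierarchy-induced map $\mathsf{T} \mapsto \mathsf{D}(\mathsf{T})$ onto nested offers, establish $\mathsf{Q}_{\text{\tiny CDLP}}(\mathsf{D}(\mathsf{T})) = \mathsf{Q}_{\text{\tiny PCP}}(\mathsf{D}_L)$ as the key step, and conclude feasibility and value preservation at the PCP optimum. The only difference is presentational: your time-integral representation of the sales (matching both sides to the length of the interval $(\mathsf{T}_l^{p\text{-}1}, \mathsf{T}_{l^p}]$) replaces, and arguably cleans up, the paper's explicit telescoping sum over ranks $k$.
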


\begin{proof}
	We calculate the CDLP sales for the $\mathsf{D}(\mathsf{T})$ durations:
	\begin{align*}
	 \mathsf{Q}_\text{\tiny CDLP} (\mathsf{D}(\mathsf{T}))
	 	= \sum_{S\subseteq J}\sum_{l \in L} \lambda_l \mathsf{P}_l(j|S) \mathsf{D}(\mathsf{T})_S
	 	= \sum\limits_{l \in L} \lambda_l\sum_{j \in C_l}\sum\limits_{k=1}^{n_\mathsf{T}} \mathsf{P}_l(j|S_k) \mathsf{D}_{k} 
	\end{align*}
	As seen in section \ref{productClosing:subsec:buyinglogicunderclosingpolicy}, each segment buys a product $j \in J$ between $\mathsf{T}_l^{l_j-1}$ and  $\mathsf{T}_l^{l_j}$, with $\mathsf{P}_l(j|S_k)=\mathsf{P}_l(j|\{j\})$ otherwise $\mathsf{P}_l(j|S_k)=0$. These closing times corresponds respectively to $k = \mathsf{H}_{l^{l_j-1}}$ and $k = \mathsf{H}_j$ such that:	
	\begin{align*}
	 \mathsf{Q}_\text{\tiny CDLP} (\mathsf{D}(\mathsf{T}))
	 &	= \sum\limits_{l \in L} \lambda_l\sum_{j \in C_l}\sum\limits_{k= \mathsf{H}_{l^{l_j\text{-}1}}}^{\mathsf{H}_j} \mathsf{P}_l(j|\{j\}) \mathsf{D}_{k} \\
	 &	= \sum\limits_{l \in L} \lambda_l\sum_{j \in C_l} \mathsf{P}_l(j| \{j\}) \left(\mathsf{T}_{\mathsf{H}_j}-\mathsf{T}_{\mathsf{H}_j\text{-}1}+\mathsf{T}_{\mathsf{H}_j\text{-}1}-\dots-\mathsf{T}_{\mathsf{H}_{l^{l_j-1}}}\right) \\
	 &	= \sum\limits_{l \in L} \lambda_l\sum_{j \in C_l} \mathsf{P}_l(j| \{j\})(\mathsf{T}_l^{l_j}-\mathsf{T}_l^{l_j-1}) \\
	 & = \sum\limits_{l \in L} \lambda_l\sum_{j \in C_l} \mathsf{P}_l(j| \{j\})\mathsf{D}_l^k= \mathsf{Q}_\text{\tiny PCP} (\mathsf{T})
	\end{align*}	
	Moreover, $\sum_{S \subseteq J}\mathsf{D}(\mathsf{T})_S \le \mathsf{T}_\mathsf{H}^{n_\mathsf{T}} \le \tau$ such that $\mathsf{D}(\mathsf{T})$ is a CDLP feasible solution and $\mathsf{R}_\text{\tiny CDLP} (\mathsf{D}(\mathsf{T})) = \mathsf{R}_\text{\tiny PCP} (\mathsf{T})$.
	\end{proof}		

One consequence is that we can use the PCP solution as an initial solution for the CDLP. 
We call this approach the Choice Deterministic with Products Closing initial solution (CDPC).

\begin{align*}
	\label{productClosing:eq:CDLP}
	\tag{CDPC}
	\mathsf{R}^\star_\text{\tiny CDPC} (\mathsf{D}^\star) = \; &  \max_{\mathsf{D}} \mathsf{R}_\text{\tiny CDLP} (\mathsf{D})		\\ 
					s.t. \; & \mathsf{D}^0 = \mathsf{D}(\mathsf{T}^\star)\\
							& \mathsf{T}^\star = \arg \max \mathsf{R}_\text{\tiny PCP} (\mathsf{T})
\end{align*}
Where $\mathsf{D}^0$ is the initial solution to solve the CDLP.
CDPC allows to obtain a reopening policy by using the PCP. 
We will also see in the numerical experiments that PCP through the CDPC is useful to accelerate the CDLP for the largest instances.

Another option to reopen solutions, is to use re-optimization thought the reservation period to make the PCP reopen sales if allowed.
If reopening is prohibited, we just have to fix $\mathsf{T}_j=0$ for all the products already closed at the time of re-optimization.

In the case of no-reopening, PCP and CDLP are equivalent:

\begin{proposition}
\label{productClosing:proposition:equivalenceCDLPandPCPIfNoReopening}
If no-reopening,  $\mathsf{R}^\star_{\text{\rm PCP}} = \mathsf{R}^\star_{\text{\rm CDLP}}$
\end{proposition}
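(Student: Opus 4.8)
The plan is to prove the reverse inequality to Proposition~\ref{productClosing:proposition:CDLPUpperBoundOfPCP}: under no-reopening, every feasible CDLP solution can be matched by a PCP solution of equal revenue, so that $\mathsf{R}^\star_\text{\tiny CDLP} \le \mathsf{R}^\star_\text{\tiny PCP}$. Since Proposition~\ref{productClosing:proposition:CDLPUpperBoundOfPCP} already gives the other direction (and its map $\mathsf{T}\mapsto\mathsf{D}(\mathsf{T})$ lands among no-reopening solutions), combining the two yields equality. First I would read off the structure that no-reopening imposes. By its definition, no-reopening means each product is offered on an initial interval and never afterwards, i.e. the realized offer $S(t,x)$ is non-increasing in $t$; hence the offers carrying positive duration form a single chain $S_1 \supsetneq S_2 \supsetneq \dots \supsetneq S_p$ under inclusion. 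This is the exact mirror of the remark preceding Proposition~\ref{productClosing:proposition:CDLPUpperBoundOfPCP} that $\mathsf{D}(\mathsf{T})$ is nested.

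Next, given such a chain with durations $d_1,\dots,d_p$, I would set for each product $j$ the closing time $\mathsf{T}_j = \sum_{i\,:\,j \in S_i} d_i$. Because the chain is nested, the index set $\{i : j \in S_i\}$ is a prefix $\{1,\dots,k_j\}$, so $\mathsf{T}_j$ is the total length of the initial interval during which $j$ is offered, and the offer active at time $t$ is precisely $\{j : t \le \mathsf{T}_j\}$, which is the PC policy. Feasibility is then immediate: $\mathsf{T}_j \in [0,\tau]$ because $\sum_i d_i \le \tau$ by the horizon constraint, and the capacity constraint transfers verbatim once the sales are shown to coincide.

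The revenue equality follows by running the computation of Proposition~\ref{productClosing:proposition:CDLPUpperBoundOfPCP} in reverse, with the chain $(S_i)$ playing the role of the hierarchy $\mathsf{H}$: each segment's $k^\text{th}$ choice is bought exactly over the offers lying between the closing $\mathsf{T}_l^{l_j-1}$ of its predecessor and its own closing $\mathsf{T}_l^{l_j}$, and the telescoping sum collapses to $\mathsf{Q}_\text{\tiny PCP}(\mathsf{T})$. Hence $\mathsf{R}_\text{\tiny CDLP}(\mathsf{D}) = \mathsf{R}_\text{\tiny PCP}(\mathsf{T})$, giving $\mathsf{R}^\star_\text{\tiny CDLP} \le \mathsf{R}^\star_\text{\tiny PCP}$ and therefore the claimed equality.

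I expect the main obstacle to be the first step, namely arguing rigorously that no-reopening forces the positive-duration offers into a single inclusion chain, and in particular handling CDLP degeneracy, where the same realized policy can be represented by different duration vectors (as in the \emph{running example}, where $\mathsf{D}_{\{v,w\}},\mathsf{D}_{\{w\}}$ and $\mathsf{D}_{\{v\}},\mathsf{D}_{\{w\}}$ are both optimal, yet only the former is nested). The clean way around this is to reason at the level of the realized policy $S(t,x)$ rather than the raw durations: no-reopening is a property of $S(\cdot)$, the chain is read off from $S(\cdot)$, and the closing times $\mathsf{T}_j$ are intrinsic to $S(\cdot)$, so the degenerate duplications are quotiented out automatically.
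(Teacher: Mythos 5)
Your proposal is correct and follows essentially the same route as the paper's own proof: map a no-reopening CDLP duration vector $\mathsf{D}$ to closing times $\mathsf{T}(\mathsf{D})$, rerun the telescoping sales computation of Proposition~\ref{productClosing:proposition:CDLPUpperBoundOfPCP} to get $\mathsf{R}_\text{\tiny CDLP}(\mathsf{D}) = \mathsf{R}_\text{\tiny PCP}(\mathsf{T}(\mathsf{D}))$, hence $\mathsf{R}^\star_\text{\tiny CDLP} \le \mathsf{R}^\star_\text{\tiny PCP}$, and conclude by combining with Proposition~\ref{productClosing:proposition:CDLPUpperBoundOfPCP}. The paper states this in three lines and leaves the details implicit, whereas you make explicit the two points it glosses over (that no-reopening forces the positive-duration offers into a nested chain, and that degeneracy is handled by working with the realized policy rather than the duration vector), so your write-up is a faithful, more rigorous elaboration of the same argument.
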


\begin{proof}
	if no-reopening, each product has a unique closing time $\mathsf{T}_j$. 
	We denote by $\mathsf{T}(\mathsf{D})$, the products closing times obtained from the no-reopening durations.
	Similarly to the proof of proposition 1, we can show that $\mathsf{R}_\text{\tiny CDLP} (\mathsf{D}) = \mathsf{R}_\text{\tiny PCP} (\mathsf{T}(\mathsf{D}))$ so that  $\mathsf{R}^\star_{\text{\rm PCP}} \ge \mathsf{R}^\star_{\text{\rm CDLP}}$ when no reopening.
	With proposition 1, the result is immediate.
\end{proof}

\section{Solving the PCP} 
\label{productClosing:sec:solvingThePCP}

In this section, we describe how we linearize the PCP to obtain a mixed integer linear program.
We also present methods to rapidly solve the linearization.

\subsection{Benefits of overlapping}
\label{productClosing:subsec:benefitsOfOverlapping}	
	
In the PCP, the only non-linear constraint $\mathsf{T}^k_l = \max_{j \in C^{[k]}_l} \mathsf{T}_j$ determines segment choice closing time.
If segments share a same partial consideration set, for example: $C_{l_1}^{[k]} = C_{l_2}^{[k]}$ with $l_1,l_2 \in L$ , we can eliminate redundant constraints as follows:
 \begin{align*}
	 \mathsf{T}_l^1 = \mathsf{T}_{l^1} 	& \quad \forall l \in L \\	
	 \mathsf{T}_l^k = \mathsf{T}_S \; | \; S = C^{[k]}_l	& \quad \forall k \in [2,n_l], \; l \in L \\	
	 \mathsf{T}_S = \max\limits_{j \in S} \mathsf{T}_j		& \quad \forall S\in C_L
 \end{align*}	
where $\mathsf{T}_S$ is the maximum time of products among offer $S$ and $C_L$ is the union of the segment consideration subsets, determined as follows:
\begin{equation*}
	C_L = \bigcup\limits_{l \in L} \bigcup_{k=2}^{n_l} C_l^{[k]}
\end{equation*}
For example, for two segments with preference lists $l_1:$ $u\xrightarrow{} v \xrightarrow{} w$ and $l_2:$ $v\xrightarrow{} u$, respectively, $C_L$ is $\left\{\{u,v\}, \{u,v,w\}\right\}$. 
We denote by  $n_L$ the cardinality of $C_L$. 
In the previous example $n_L=2 < (n_{l_1}-1) + (n_{l_2}-1) =3$ so that we can use the same constraint to calculate $\mathsf{T}_{l_1}^2$ ($\{u,v\}$) and $\mathsf{T}_{l_2}^2$ ($\{v,u\}$).
The number of non-linear constraint is $n_L$.
It depends on the number of segments, the number of products considered, and the overlap between segments.
A simple analysis allows us to bound $n_L$ between $\max_{l\in L} \;n_l - 1$ when the segments overlap favourably and $\sum_{l\in L} n_l - 1$ when there is no overlap.

\subsection{Linearization}
\label{productClosing:subsec:linearization}

We now linearize the constraint $\mathsf{T}_S = \max_{j \in S} \; \mathsf{T}_j$ with $S \in C_L$ introduced in the previous section \ref{productClosing:subsec:benefitsOfOverlapping}.
Let $S_0$ and $S_1$ be two strict subsets of $S$ ($S_0,S_1 \subset S$) such that $S=S_0 \cup S_1$.
We naturally have $\mathsf{T}_S = \max_{j \in S} \; \{ \mathsf{T}_{S_0}, \mathsf{T}_{S_1}\}$.
We thus only have to compare the closing times of products that $S_0$ and $S_1$ are not sharing to obtain $\mathsf{T}_S$.

Suppose for example that we have $S=\{u,v,w,z\}$, $S_0=\{u,v,w\}$ and $S_1=\{u,z\}$. 
That respects $S=S_0 \cup S_1$.
\textcolor{blue}{We simply have $\mathsf{T}_S=\mathsf{T}_{S_0}$ if and only if $v$ or $w$ are open for longer than $z$.
Conversely, $\mathsf{T}_S=\mathsf{T}_{S_1}$ if and only if $z$ is open for longer than $v$ and $w$.}
The fact that product $u$ is shared by both subsets reduces the number of comparisons.

For any set $S \in C_L$, it always exists two subsets $S_0$ and $S_1$ such that $S = S_0 \cup S_1$.
In fact, it exists $l \in L$, $k \in [1, n_l]$ such that $S = C^{[k]}_l$.
Then $S_0 = C^{[k-1]}_l \subset S$ and $S_1=\{l^k\}  \subset S$ are two strict subsets admissible because $S = S_0 \cup S_1$.

We use the hierarchy defined in section \ref{productClosing:subsec:reopenThePolicy} to `` linearize '' the products comparisons.
We introduce the hierarchy binary variables $\mathsf{H}_{u,v} =1$ if $\mathsf{H}_u \ge \mathsf{H}_v$ and 0 otherwise with $u,v \in J$.

This leads to the PC mixed integer program (PCMP) with the previous definition of $S_0$ and $S_1$:

\begin{align*}
\tag{PCMP}
\label{productClosing:eq:PCMP}
\mathsf{R}^\star_\text{\tiny PCMP} (\mathsf{T}^\star)  = \max_{\mathsf{T}} \quad 
&r^\top \mathsf{Q}_\text{\tiny PCP} (\mathsf{D_L})\\
s.t. \quad
&A \mathsf{Q}_\text{\tiny PCP} (\mathsf{D_L}) \le c\\
& \mathsf{D}_l^k  = \mathsf{T}_l^k - \mathsf{T}^{k\text{-}1}_l							& \forall k \in [1,n_l], \; l \in L \\
& \mathsf{T}_l^1 = \mathsf{T}_{l^1}									& \forall l \in L \\		
& \mathsf{T}_l^k = \mathsf{T}_S \; | \; S = C^{[kø]}_l						& \forall k \in [2,n_l], \; l \in L \\	
&S = S_0 \cup S_1   & \forall S \in C_L, (S_0,S_1) \subset S\\
&\mathsf{T}_S \ge \mathsf{T}_{S_0}  	& \forall S \in C_L, S_0 \subset S\\
&\mathsf{T}_S \ge \mathsf{T}_{S_1}  	& \forall S \in C_L, S_1 \subset S\\
&\mathsf{T}_S \le \mathsf{T}_{S_0} + \tau \sum\limits_{\substack{u \in S_1\setminus S_0 \\ v \in S_0\setminus S_1}} \mathsf{H}_{u,v} 	& \forall S \in C_L,  (S_0,S_1) \subset S\\
&\mathsf{T}_S \le \mathsf{T}_{S_1} + \tau \mathsf{H}_{v,u}  	& \begin{array}{rr}
\forall u\in S_0\setminus S_1, \forall  v\in S_1\setminus S_0\\
\forall S \in C_L, (S_0,S_1) \subset S
\end{array}  \\
&\mathsf{H}_{u,v} \in \{0,1\} 							&\forall u,v \in J
\end{align*}

\textcolor{blue}
{
	The objective function and constraints (1-2) remain the same as in the PCP.
	The constraints (3-4) link the closing time of each segment choice to a set.
	Each set closing time is determined with products closing time by constraints (5-10) as described above.
}

To limit the number of constraints, we must find the strict subsets $S_0 \subset S$ and $S_1 \subset S$ with the highest cardinality.
We do this when building the program, and we exploit the overlap between segments.
Our model uses overlap to eventually reduce complexity.

\subsection{Use of hierarchy} 
\label{productClosing:subsec:useOfHierarchy}

The hierarchy variables represent a hierarchy between products that could be fixed before we solve the PCMP.
This leads to the PC linear program (PCLP) for any fixed hierarchy $\mathsf{H}$:

\begin{align*}
\tag{PCLP}
\label{productClosing:eq:PCLP}
\mathsf{R}^\star_\text{\tiny PCLP}(\mathsf{H}, \mathsf{T}^\star) = \max_\mathsf{T} \quad
& r^\top Q_\text{\tiny PCP} (\mathsf{D}_L)\\
s.t.\quad 
&A Q_\text{\tiny PCP} (\mathsf{D}_L) \le c	&		\\
&\mathsf{D}_l^k  = \mathsf{T}_l^k - \mathsf{T}_l^{k-1}				&\forall k\in [0,n_l], \; l\in L					\\
&\mathsf{T}^k_l = \mathsf{T}_j \; | \; j = \arg \max_{j \in C_l^{[k]}} \mathsf{H}_j					&\forall k\in [0,n_l], \; l\in L
\end{align*} 

For the optimal hierarchy $\mathsf{H}^\star$, the PCLP and PCP are equivalent.
However, there are $n!$ permutations of the products, and each one is an admissible hierarchy.
Determining the optimal hierarchy is thus a difficult combinatorial problem.

It is easier to find a good but not necessarily optimal hierarchy.   
We can for example: 
\begin{itemize}
	\itemsep0em 
	\item Rank products by price;
	\item Rank products by price divided by number of resources;
	\item Reuse a hierarchy from a previous PCMP optimal solution;
	\item Use a hierarchy specified by the company (often called nesting in practice). 
\end{itemize}

Solving the PCLP with a good but not optimal hierarchy allows us to rapidly obtain a good solution by integrating business-specific knowledge in the model.
Depending on the time available, we can use this solution directly as good enough solution or use it to speed up the PCMP branch-and-bound algorithm.

\section{Numerical experiments}
\label{productClosing:sec:numericalExperiments}

In this section, we conduct numerical experiments to benchmark the following approximations:
\begin{description}
	\item[\textbf{CDLP}] \citep{liu2008choice}:
	Described in Section \ref{productClosing:subsec:staticApproximations} and solved by column generation with the \cite{hosseinalifam2016computing} subproblem for preference-list choice behavior. 
	\item[\textbf{SDCP}] \citep{meissner2013enhanced}: 
	We add product constraints for larger subsets until the objective function no longer changes. 
	\item[\textbf{PCLP}:] Presented in Section \ref{productClosing:subsec:useOfHierarchy}.
	The hierarchy is established by ranking products by their price and then by their potential demand if price are equals. The hierarchy is obtained by ranking the products by price.
	\item[\textbf{PCMP}:] 
	Presented in Section \ref{productClosing:subsec:linearization}.
	An initial solution corresponding to the previous PCLP solution is given to the branch-and-bound process, as explained in Section~\ref{productClosing:subsec:useOfHierarchy}.
	The relative integrability gap is set to $10^{-3}$.
	\item[\textbf{CDPC}:] 
	The CDLP approximation with an initial solution given by the PCMP, as explained in Section~\ref{productClosing:subsec:reopenThePolicy}.
\end{description}

\noindent We use the following policies:
\begin{description}
	\item[\textbf{OP}] 
	is the OP policy described in Section~\ref{productClosing:subsec:staticApproximations}.
	It is obtained by a lexicographic sequencing of the CDLP durations $\mathsf{D}$.
	\item[\textbf{PB}] 
	is the PB policy corresponding to fixing a static limit $q_j$ for each product, as explained in Section~\ref{productClosing:subsec:staticApproximations}.
	\item[\textbf{PC}] 
	is the PC policy returned by the PCP, as explained in Section~\ref{productClosing:sec:closingApproximation}.
	\item[\textbf{OD}] 
	is the OD policy described in Section \ref{productClosing:subsec:dynamicApproximations}.
	It is obtained by the dynamic decomposition of \cite{bront2009column} with $\beta = 1$.
\end{description}
\textcolor{blue}
{
	We use the CDLP, SDCP and CDPC models without any additional constraints.
	They thus allow reopening contrarily to the PCMP and PCLP models that naturally prevent it (see Section~\ref{productClosing:sec:closingApproximation}).
}

\textcolor{blue}
{
\noindent All the notations used in this section are the following:
\begin{description}
	\item[\textbf{R}] is the revenue corresponding to the approximation optimal solution. 
	\item[\textbf{CPU}] is the seconds needed to solve the approximation.
	\item[\textbf{E[R]}] is the expected revenue of the approximation policy.
	We use a Monte-Carlo approach with a discrete-arrival simulation to determine the expected revenue.
	We generate random discrete arrivals by generating arrival timings according to a Poisson process for each segment.
	Each simulation is stopped after a number of evaluations specific to the instance.
	\item[$\bm\Delta$\textbf{E[R]}] is the  expected revenue relative difference between the approximation and the CDLP with OP policy.
	\item[\textbf{LF}] is the load factor equals to $\sum_{l \in L} \lambda_l / \sum_{i \in I} c_i$. It is simply the sum of arrivals over the sum of capacities. We build scenarios by varying the load factor. By multiplying all the $\lambda_l$ by the same factor, we obtain the desired LF.
	\item[\textbf{CF}] is the capacity factor as the percentage of remaining capacity and is thus equal to $\sum_{i \in I} x_i / \sum_{i \in I} c_i$.
\end{description}
}
	
\textcolor{blue}
{
	Because instances are too large for calculating the optimal revenue $\mathsf{V^{\star}}$, we select the best approximations by sorting them by highest expected revenue (E[R]) and lowest running time (CPU).
}

\subsection{Parallel flights} 
\label{sec:sub:parallelFlights}

Our first instance, parallel flights, is illustrated in Figure \ref{fig:productClosing:parallelFlightsResources}. 
It is composed of three parallel flights, of capacity 100, from city A to city B at 09:00, 11:00, and 20:00.
\begin{figure}[h!]
\centering
\includegraphics[max width=\textwidth]{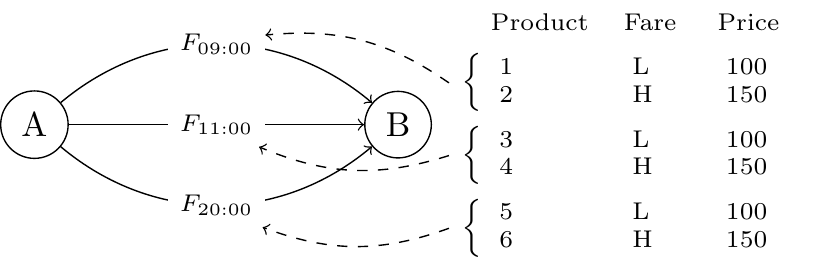}
\caption{Resources and products for parallel flights. \label{fig:productClosing:parallelFlightsResources}}
\end{figure}
We consider two fares $H$ (150) and $L$ (100) per flight, giving six products.
The reservation period lasts 360 periods.
The customers are divided into four segments, as shown in Table~\ref{tab:productClosing:parallelFlighsSegments}.
\begin{table}[h!]
\centering
\includegraphics[max width=\textwidth]{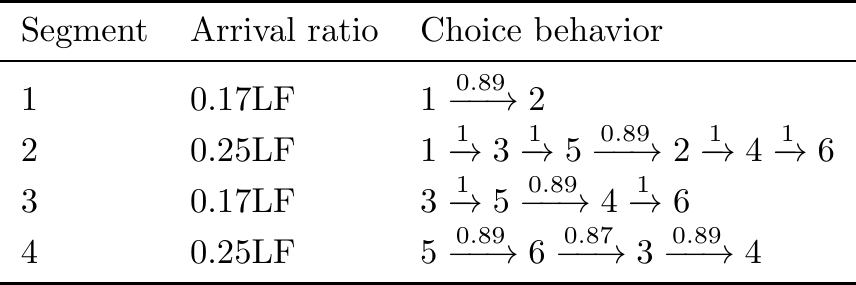}
\caption{Segments for parallel flights. \label{tab:productClosing:parallelFlighsSegments}}
\end{table}

Table \ref{tab:parallelFlightsResults} presents the running time and expected revenue for the parallel flights instance.
We first note that approximations return very similar results for a same policy.
It means that the three approximations return similar solutions as we can see in Table \ref{productClosing:ec:tab:parallelFlights} of the e-companion where approximations share the same approximation revenue and capacity factor.
	\textcolor{blue}
{
	CDLP is an upper bound for PCP in the general case. The same objective value between CDLP and PCP means that reopening does not help to increase the revenue. It is only because of the instance structure. For most of the instances we generate, there is often no advantage by using reopening such that CDLP and PCP return the same approximation revenue.
}

If we now focus on policies, we observe that OD almost always performs better than others in terms of expected revenue.
In average, it is 1.4\% better than the CDLP-OP reference whereas PB and PC are respectively -0.3\% and 0.1\%.
In fact, it is the only dynamic policy and it takes into account the order of arrivals contrarily to the three other static policies OP, PB and PC.
We can also see the effect of the dynamic aspect in Figure \ref{fig:parallelFlightsRelativeCapacityFactor} where the OD policy often has the highest expected capacity factor meaning that it captures more bookings.
It also explains why the OD policy performs better in comparison with other policies when the load factor is low or high.

\begin{table}[h!]
\centering
\includegraphics[max width=\textwidth]{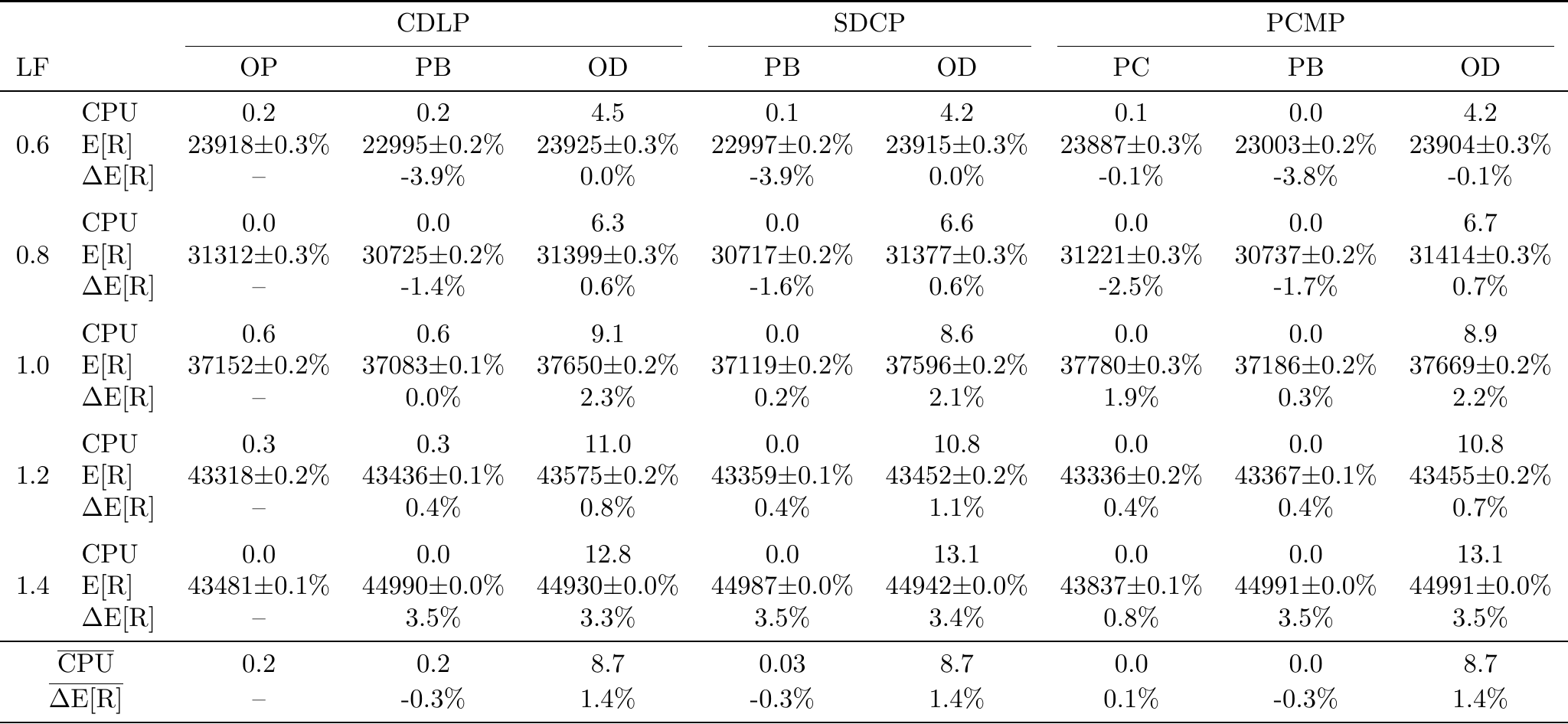}
\caption{Results for Parallel flights with a 3000 evaluations simulation.\label{tab:parallelFlightsResults}}
\end{table}

However, when we compare the running time, the OD policy is by far the slowest whereas OP, PC and PB are equivalent.
The latest policy are in average 30 times faster than the OD policy for this instance.
This long running time comes from its building process as we can see in the Table~\ref{productClosing:ec:tab:parallelFlights} of the e-companion where the time for building each policy is reported. 
This is mainly due to the high number of dynamic program to solve as we explained in Section \ref{productClosing:subsec:dynamicApproximations}.
Moreover, this building time increases with the load factor because it depends on the number of arrivals, the capacities and the number of resources.

\begin{figure}[h!]
\centering
\includegraphics[max width=\textwidth]{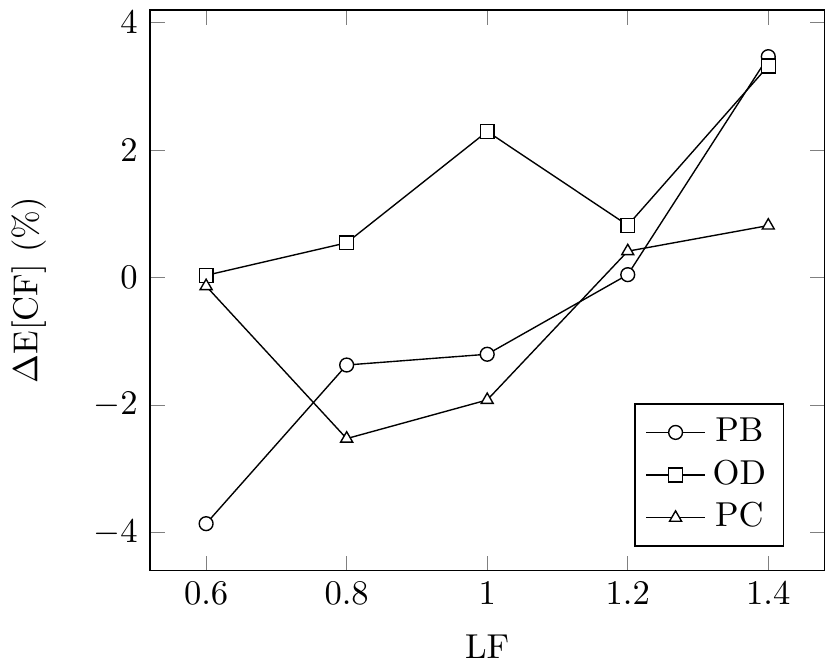}
\caption{Expected capacity factor relative difference $\Delta$E[CF] with respects to CDLP-OP for Parallel flights. \label{fig:parallelFlightsRelativeCapacityFactor}}
\end{figure}

This example also highlights the really unequal performance of the PB policy with respect to the load factor.
It is outperformed by the CDLP-OP for LF inferior or equal to one but up to 3\% better for higher load factor.
This is due to the fact that PB policy capture exactly the number of bookings provided by the related approximation.
Such that when the load factor is inferior to one, it will never capture any eventual additional demand even if capacities are not reached.
It also explains that the capacity factor is really low when the load factor is inferior to one in Figure \ref{fig:parallelFlightsRelativeCapacityFactor} and in comparison of the other policies.
Nonetheless, when the load factor is up to one, the PB policy becomes a really efficient policy because capacities are reached in the approximation.

One important fact regarding the SDCP approximation is that it cannot return an OP policy even if it is built on offers duration.
In fact, the products constraints added, as explained in \cite{meissner2013enhanced}, do not ensure homogenized durations across segments. 
For $LF = 1$, the second segment offers $\{3,5\}$ and $\{1,3,5\}$ respectively during $89.4$ and $270.4$ periods while third segments offers $\{3,5\}$ during $360$ periods.
Products constraints are respected but we cannot conclude offers duration shared by every segment.
That is why the SDCP solution is only used to build PB and OD policy for numerical experiments.

\subsection{Bus-line instance} 
\label{sec:productsClosing:numericalExperiments:BusLine}

The bus-line instance has two buses leaving at 07:00 and 11:00 from city A to cities B, C, and D.
\begin{figure}[h!]
\centering
\includegraphics[max width=\textwidth]{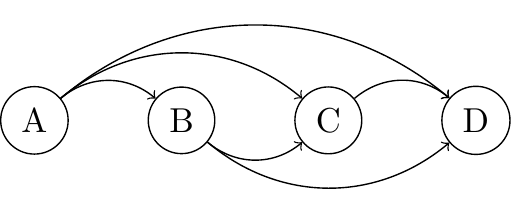}
\caption{Markets for Bus-line instance.	\label{fig:productClosing:busLineMarkets}}
\end{figure}
Six markets are thus served, as illustrated in Figure \ref{fig:productClosing:busLineMarkets}.
Each bus has a capacity of 30 and there are $2 \times 3 = 6$ resources.
Two fares (low, high) are offered for each trip, giving a total of $6 \times 2 \times 2 = 24$ products.
In the bus industry, tickets are usually available at least two months in advance, so we set $T=60$ days.
We consider five segments each considering 4 products.
In total there are $3 \times 6 = 18$ segments.
A complete description of the instance is given in the e-companion at \ref{productClosing:ec:tab:instances}.

Table \ref{tab:buslineResults} shows the running time and expected revenue for the Bus-line instance.
We come to the same conclusions as for the previous Parallel flights instance concerning the equivalence of approximations.
We note that the performance of the PB, PC and OD policies over the CDLP-OP improves as load factor increases.
For the PB policy, the reason is the same as for the Parallel flights instance and is explained in Section~\ref{sec:sub:parallelFlights}.
PC is a more robust policy than OP when there is nonreopening. 
The dynamic aspect of OD ensures better expected revenue than other policies.
These respective qualities of PC and OD are emphasized when the load factor increases because the policy is more selective contrarily to a low load factor for which most of the demand is accepted.

\begin{table}[h!]
\centering
\includegraphics[max width=\textwidth]{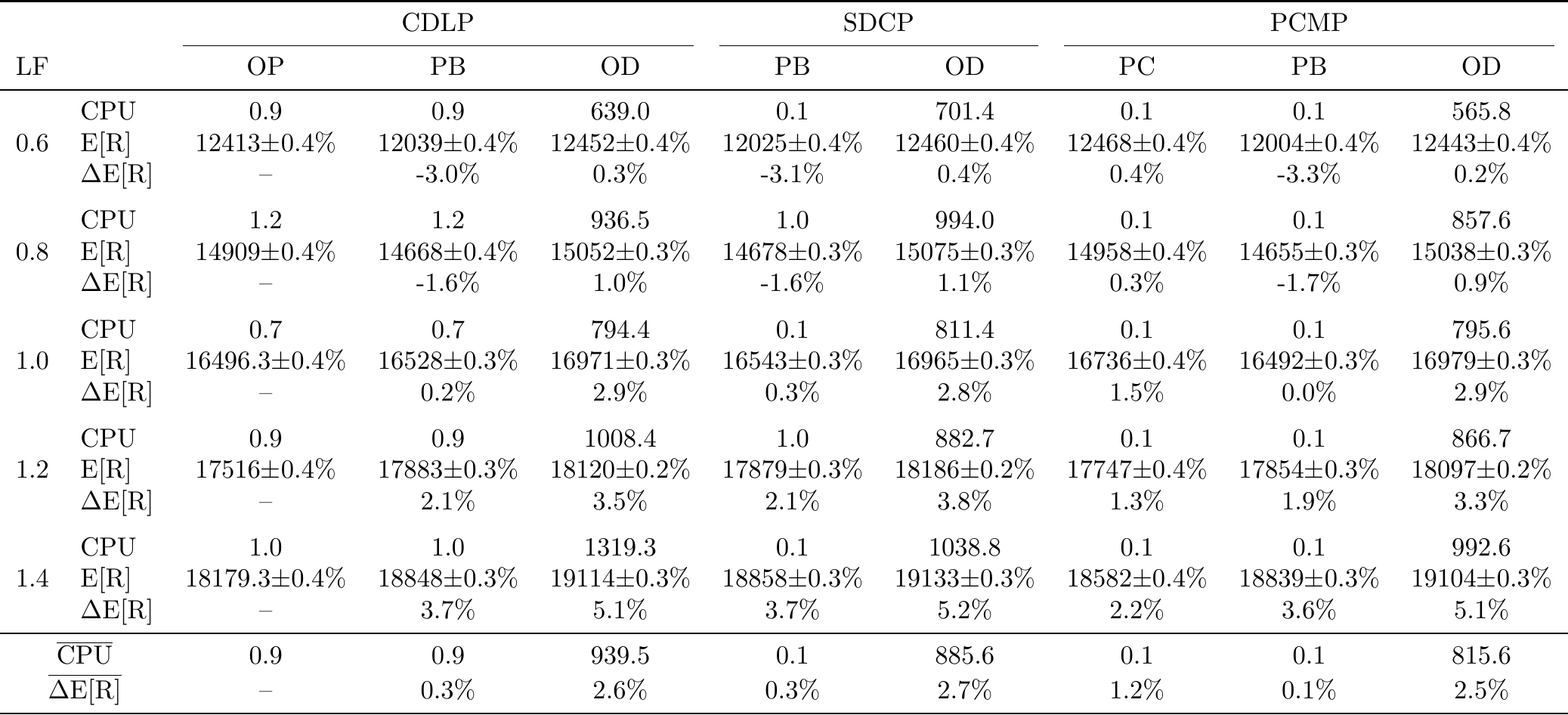}
\caption{Results for Bus-line with a 1000 evaluations simulation. \label{tab:buslineResults}}
\end{table}

This example underlines the good performance of the SDCP which is solved in average 5 times faster than the CDLP.
The products closing constraints added are sufficient to return the same \textcolor{brown}{approximation revenue} as reported in Table~\ref{productClosing:ec:tab:busLine} of the e-companion.
We cannot build OP policy but the policies PB and OD derived perform as well as or better than the CDLP ones for any load factor.

\textcolor{blue}
{
	 We note that the time to solve m times the PCMP is always faster than solving one CDLP. We can generalize this result to all the numerical experiments instances. It ensures that our approach can be used in a dynamic approximation with a finite difference calculus (see Section~\ref{productClosing:subsec:productClosingProgram}) at least as fast as the CDLP.   
}

It is clear in Table \ref{tab:buslineResults} that building the OD policy requires important postprocessing, as explained in Section~\ref{productClosing:subsec:dynamicApproximations}, and thus considerable time. 
Table \ref{productClosing:ec:tab:busLine} confirms that almost all the running time is spent on building the policy and not in solving the approximation.
Even if a leg decomposition is used, a mathematical program must be solved per leg $i \in I$ for each remaining capacity $c_i$ and each potential arrival $ \sum_{i \in I} c_i \times \text{LF}$.
Therefore, the number $N_\text{OD}$ of values to find and store for the OD policy is: 
\begin{equation*}
N_{\text{OD}}= \sum_{i \in I}c_i \left( \sum_{l\in L} \lambda_l \tau \right)
\end{equation*}
The bus-line instance is relatively small, but $N_\text{OD}$ is already equal to $6 \times 30 \times (6 \times 30LF) = 32400 LF$.
It explains why the running time increases when the load factor augments as observed at Table \ref{tab:buslineResults}.

To investigate the OD tractability, we complicate the initial instance progressively and report the number of values $N_\text{OD}$ and the time needed to build this policy at Table \ref{tab:busLineTractability}.
\begin{table}[h!]
\centering
\includegraphics[max width=\textwidth]{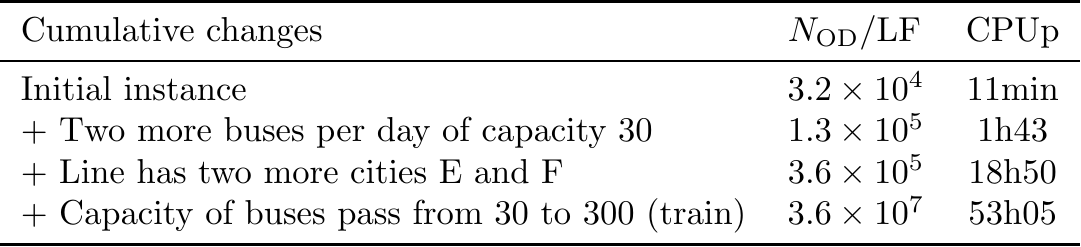}
\caption{Time CPUp to build the OD policy for Bus-line cumulative changes. \label{tab:busLineTractability}}
\end{table}
The OD policy is without doubts the best but become rapidly intractable when instances grow.
Each value to find is often obtained by solving a complex model as explained in Section~\ref{productClosing:subsec:dynamicApproximations}.
And also because computationally it is a lot of values to store.
In practice, the reservation systems may not support this amount of data for a complete network.

\textcolor{blue}
{
	In order to take into account the stochastic aspect of the demand without solving any dynamic approximation, we use re-optimization through the reservation period as introduced in Section~\ref{productClosing:subsec:staticApproximations}.
	We split the reservation period in 5, 10 and 20 checkpoints.
	When calculating the expected revenue, we stop the simulation at each one of these checkpoints to re-optimize.
	The policy obtained at each checkpoint is then valid only until the next checkpoint.
	We plot at Figure~\ref{fig:reoptimization} a comparison between CDLP and PCMP (complete results are available at Table \ref{productClosing:ec:tab:reopt}).
}

	\begin{figure}[h!]
		\centering
	\includegraphics[max width=\textwidth]{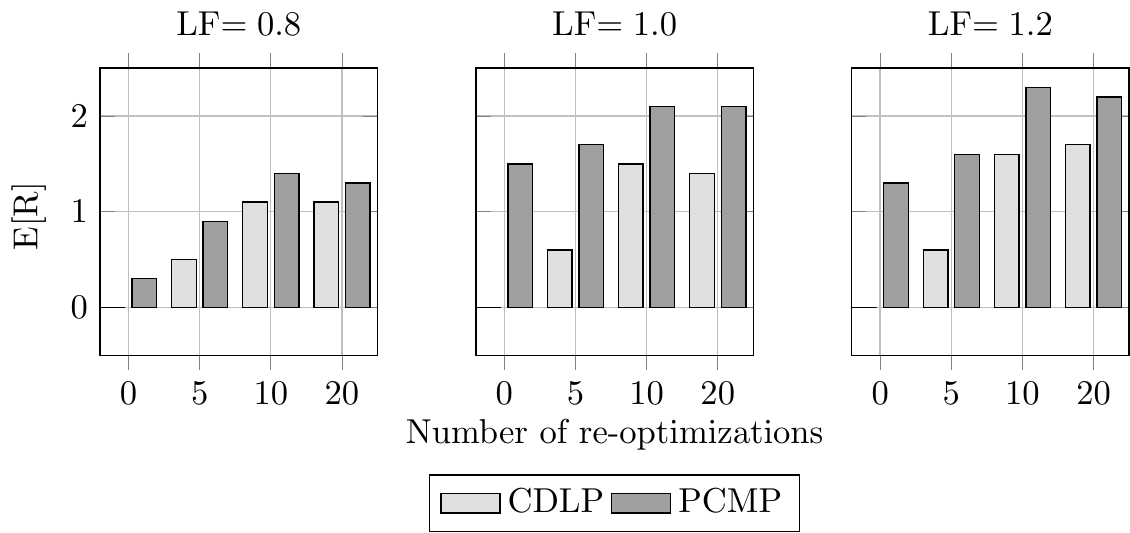}
		\caption{Effect of re-optimization on the Bus line instance.}
		\label{fig:reoptimization}
	\end{figure}

\textcolor{blue}
{
	We note that the re-optimization increases the expected revenue of these two approximations, up to 1.6\%. 
	The number of re-optimizations improves results but seems to reach a maximum with 10 re-optimizations.
	The gap with the OD policy is reduced even if this policy is still between 0.3\% and 0.8\% better.
}

\subsection{Airline instance} 
\label{sec:sub:airline}

The airline instance is based on the Delta Air Lines network limited to eight major US airports, as illustrated in Figure \ref{fig:airlineMarkets}.
We start by limiting the instance on the five largest airports: ATL, LAX, ORD, DFW, and DEN.
A complete description of the instance is given in the e-companion at \ref{productClosing:ec:tab:instances}.
\textcolor{blue}
{
	Segments were generated with different considerations (price, flight duration, departure time, arrival time, product conditions, direct flight) in order to represent real customer behaviors and to have many different permutations of products.
}

\begin{figure}[h!]
\centering
\includegraphics[max width=\textwidth]{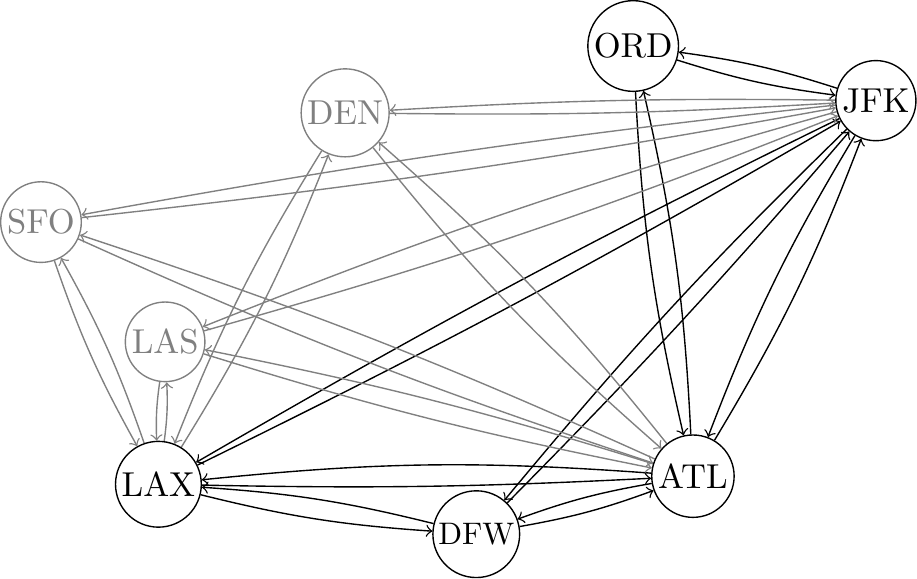}
\caption{Markets of Airline. The five largest airports are represented in bold. \label{fig:airlineMarkets}}
\end{figure}

We do not benchmark the OD policy for this instance because the problem become intractable for this size, as shown in Section~\ref{productClosing:subsec:dynamicProgramFormulation} and confirmed by tests.
For the SDCP, the number of products constraints is at most ${{L}\choose{2}} \times 2^{\max\limits_{l\in L}n_l}= 95703 \times 1024 \approx 9.8 \times 10^7$ according to \cite{meissner2013enhanced}.
Even if this is an upper bound, the search for the intersections between segments is intractable.
That is why we do not benchmark the SDCP in the Airline instance.
The CDLP with column generation takes much time to solve and PCMP resolution is more difficult.
We thus introduce the CDPC and PCLP approximations for this larger instance.

\begin{table}[h!]
\centering
\includegraphics[max width=\textwidth]{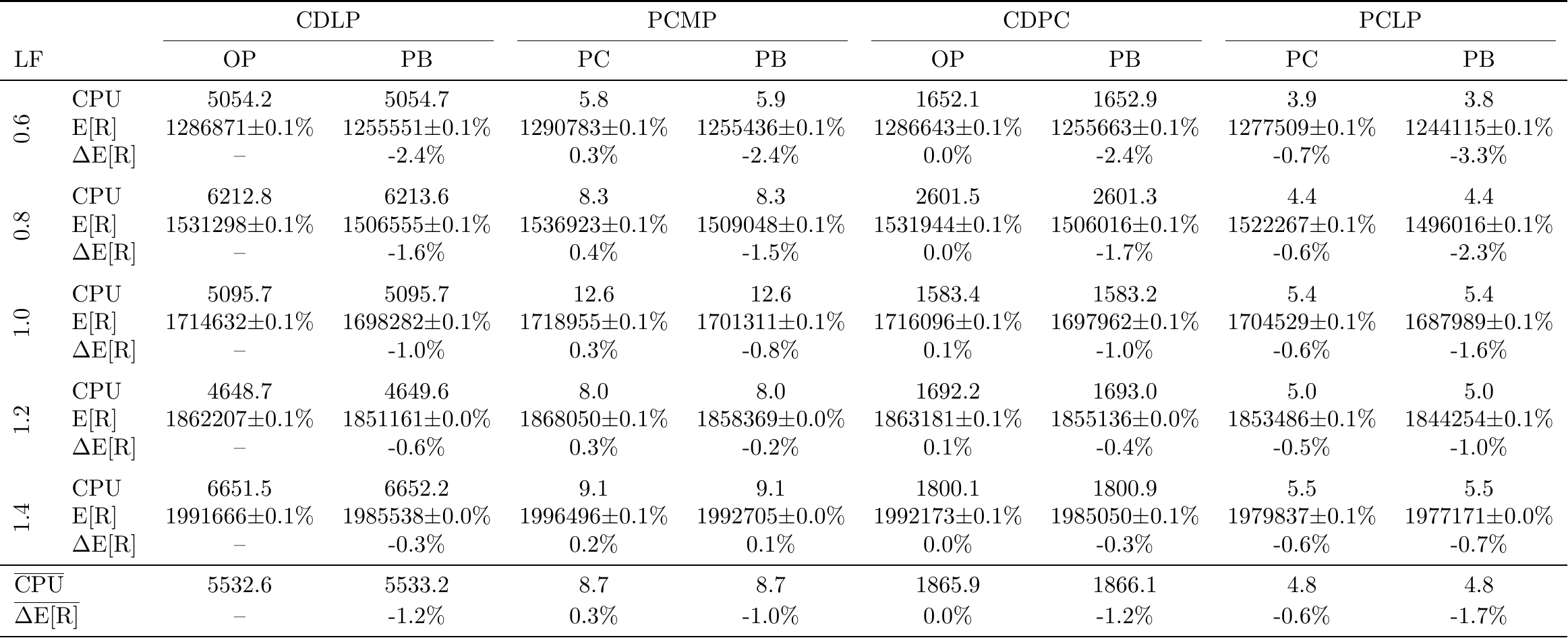}
\caption{Results for Airline with a 500 evaluations simulation. \label{tab:airlineResults}}
\end{table}

Table \ref{tab:airlineResults} reports the running time and expected revenues of the CDLP, PCMP, CDPC and PCLP for the Airline instance with different load factor. 
The full results are reported in Table \ref{productClosing:ec:tab:airline} of the e-companion.

We observe the same phenomenon for the PB policy as for the previous instances.
It can not capture the excess of the demand which is problematic for low factor and rapidly overshadowed by capacity saturation when load factor increases.

We also note that our approach is computed in less than 15~seconds, which is remarkable given the instance size.
It is much faster than the CDLP and always returns a slightly better expected revenue.
This gain in revenue, in average 0.3\%, for the PCMP must be explained by the robustness of closing sales once rather than proposing different offers over the reservation period.

Even though, we note that the CDLP always returns a slightly better optimal revenue in the e-companion at  \ref{productClosing:ec:tab:airline}.
This may be explained by the integrity gap chosen for PCMP or the reopening permitted by CDLP.

\begin{figure}[h!]
\centering
\includegraphics[max width=\textwidth]{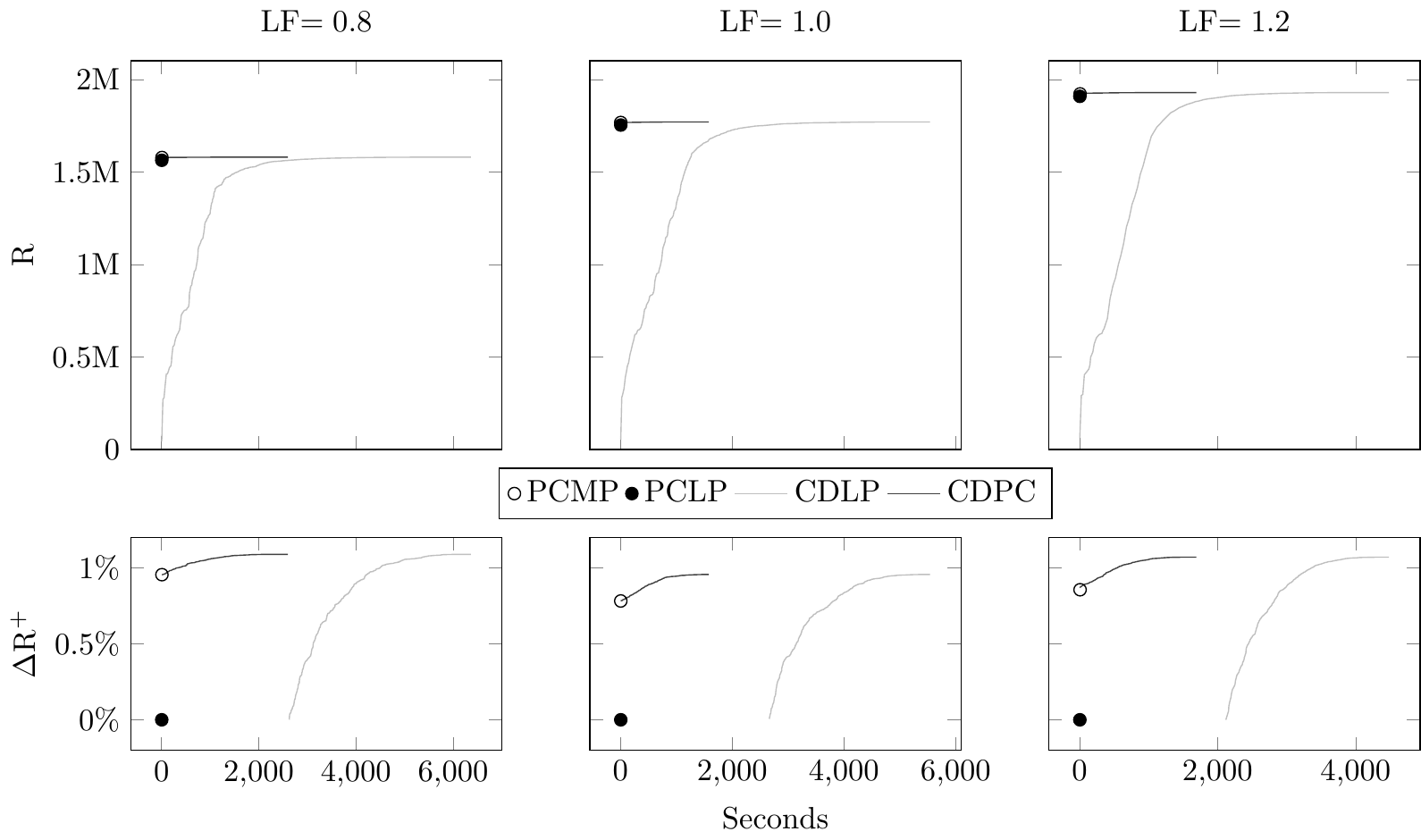}
\caption{\textcolor{blue}{Approximation revenue} towards time for Airline.\label{fig:airlineConvergence}}
\end{figure}

This instance also shows the good quality of our PCLP heuristic. 
In fact PCLP is solved twice faster than PCMP and returns an expected revenue only $0.59\%$ lower than the CDLP-OP.	
However, solving PCMP remains quick and the difference in expected revenue with this approximation is almost $1.0\%$.

We also observe the good performance of our CDPC approach.
It accelerates in average by three the CDLP resolution and returns the same ideal revenue (see e-companion Table \ref{productClosing:ec:tab:airline}) and similar expected revenue, as we can see in Table \ref{tab:airlineResults}, with a 0.04\% difference.
We thus obtain in much less time a really good reopening solution by mixing PCMP and CDLP.

To better illustrate the convergence speed, we plot in Figure \ref{fig:airlineConvergence} the \textcolor{blue}{approximation revenue} R of each approximation vs.~the solution time for different load factor.
CDLP and CDPC are plot by cherry piking and smoothing their column generation solving. 
$\Delta$R$^+$ is the \textcolor{blue}{approximation revenue} relative difference in percent with respect to the PCLP when positive.

We observe that our PCMP approximation rapidly returns a \textcolor{blue}{very good solution} contrarily to the CDLP.
The latter takes more than one hour to converge to solutions found in average in less than 15\,s by PCMP.

The gain in time by choosing the PCMP as a initial solution for the CDLP is perfectly represented in the Figure \ref{fig:airlineConvergence}.
We note that the remaining column generation increases only by less than 0.1\% the solution and the convergence is very slow.

To test the tractability of our approach, we now increase progressively the number of cities in the network.
Table \ref{tab:airlineScalability} lists the evolution of the network characteristics.

\begin{table}[h!]
\centering
\includegraphics[max width=\textwidth]{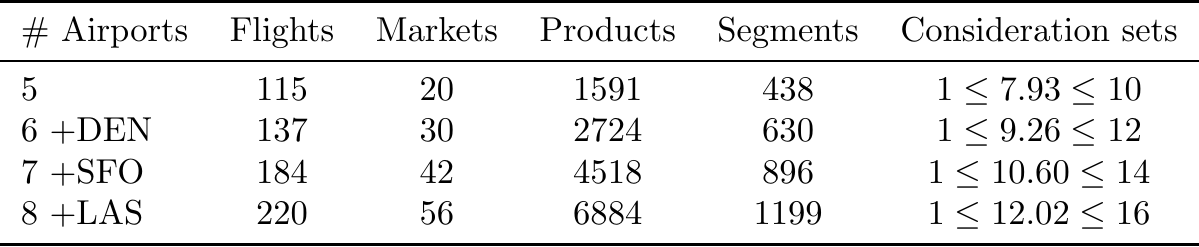}
\caption{Airline characteristics by number of cities considered. The five initial cities are ATL, LAX, ORD, DFW, and DEN. \label{tab:airlineScalability}}
\end{table}

Figure \ref{fig:airlineScalability} reports the running time CPU, on a logarithmic scale, for CDLP, PCMP CDPC and PCLP and the expected revenue E[R] for different sizes of network.

\begin{figure}[h!]
\centering
\includegraphics[max width=\textwidth]{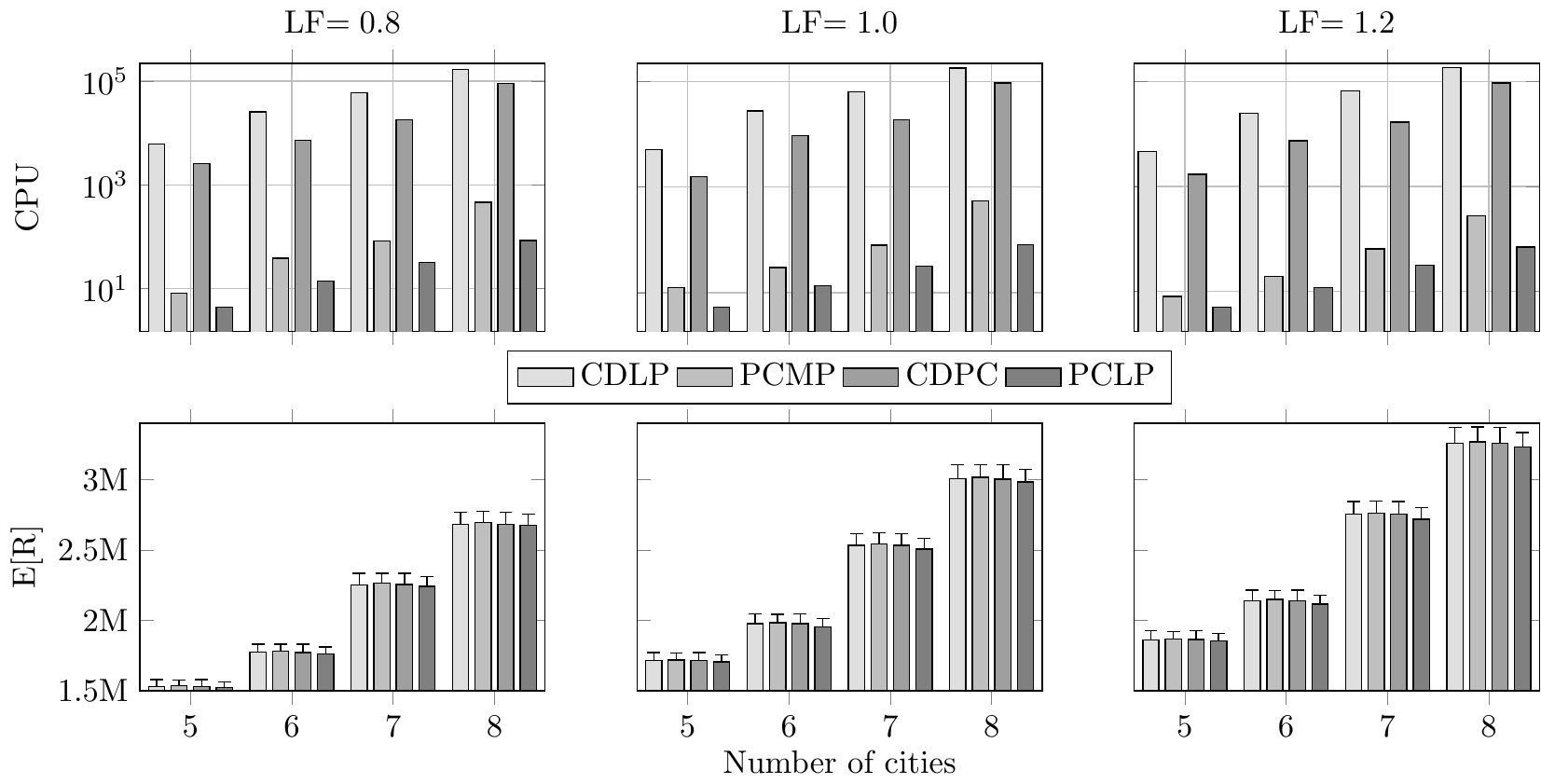}
\caption{Running seconds CPU and expected revenue E[R] for Airline with scaled number of cities considered.\label{fig:airlineScalability}}
\end{figure}

The running times are really similar for the load factors experimented.
The faster resolution of the PCMP in comparison with the CDLP is even more pronounced as the network grows.
Indeed, the CDLP is far longer to solve because each subproblem highly suffers from the increase of products.

The difference between the CDLP and the CDPC running time is considerable.
In fact, it corresponds to the time for the CDLP to reach the PCMP ideal revenue.
This shows how much the PCMP convergence (branching the hierarchy binaries) is faster than the CDLP column generation.
Moreover, it emphasizes the significant benefice of taking the PCMP as initial solution for the CDLP (CDPC).

Not surprisingly, the expected revenue is higher as the load factor or the number of cities increases.
We note that the PCMP returns a slightly better expected revenue (between 0.3\% and 0.6\%).
As for the previous instances, this illustrates the more robust structure of the PC policy.

We observe that, in average, the PCMP is solved in 60\,s for 7 cities and in 450\,s for 8 cities.
This noticeable gap underlines the first difficulties for the PCMP as instances grows.
On another side, the PCLP requires respectively 38\,s and 60\,s and does not seem not as impacted by this scaling.
Its solving time increases smoothly and the expected revenue is only respectively 0.3\% and 0.6\% lower than CDLP and PCMP.
The PCLP seems a good alternative for largest instance and the expected revenue returned could be improved by better method to select the hierarchy.

\section{Conclusion}
\label{productClosing:sec:conclusion}

We have presented a new static approximation for the CNRM problem with \textcolor{blue}{ranking-based} choice behavior.
We focus on the preference list because the multinomial logit model suffers from the independence of irrelevant alternatives.
Rather than working with offers, we work directly with the products and determine when to stop selling each one.
For small and medium instances, the different approximations and associated policies (OP, PC, PB, OD) give similar results.
However, OD can give the best results if the leg decomposition is appropriate for the instance, because of its dynamic adaptation to the stochastic demand.
For larger instances, our approximation outperforms the current approximations because the policy gives a slightly better expected revenue for a much shorter solution time.
Our approximation is based on a no-reopening policy. A solution with reopening can be generated by using the  PCMP solution as an initial solution for CDLP.
This two-phase approach greatly accelerates CDLP. 
For even larger instances, our approximation is designed to become linear if a hierarchy is fixed.
A good hierarchy is in practice not hard to find.
The linear program obtained can be rapidly solved and returns a near-optimal solution.
With its greatly reduced solution time and good-quality policy, our approximation is a promising approach for practical implementations.  

\section*{Acknowledgments}

The authors are grateful to the natural sciences and engineering research council of Canada (NSERC), the Fonds de recherche du Quebec en nature et technologies (FRQNT) and the company ExPretio technologies for having funded and supported this research.
The authors also thank the anonymous referees and associate editor, whose feedback helped improve the paper.

\section*{References}
\bibliographystyle{elsarticle-harv}
\bibliography{bibliography}

\newpage
\appendix
\label{productClosing:sec:electronicCompanion}

\section{Instances}
\label{productClosing:ec:sec:instances}

\begin{table}[h!]
\centering
\includegraphics[max width=0.99\textwidth]{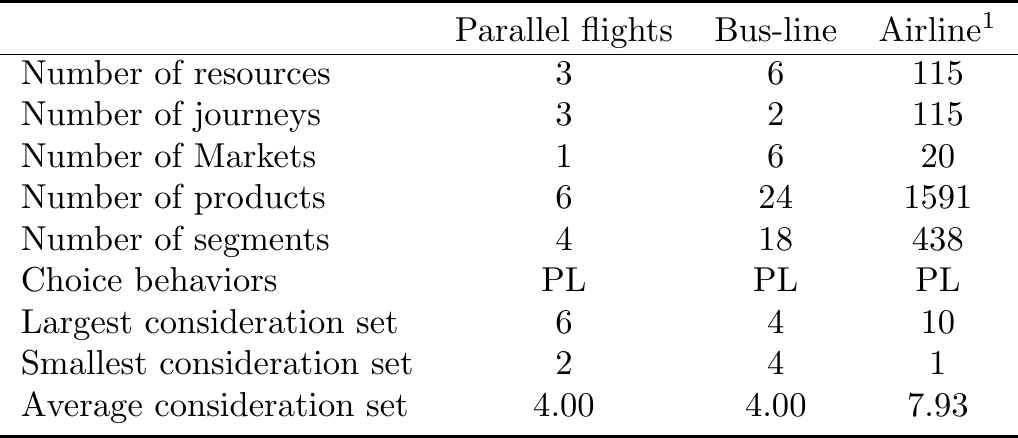}
\caption{Instances characteristics. $^{1}$ five cities. We use the preference list (PL) choice behavior as presented in the article. \label{productClosing:ec:tab:instances}}
\end{table}

Instances are entirely described in CSV files at:
\begin{figure*}[h!]
\centering
http://thibaultbarbier.com
\end{figure*}

\clearpage
\section{Parallel flights}
\label{productClosing:ec:sec:parallelFlights}

\begin{table}[h!]
\centering
\includegraphics[max width=\textwidth]{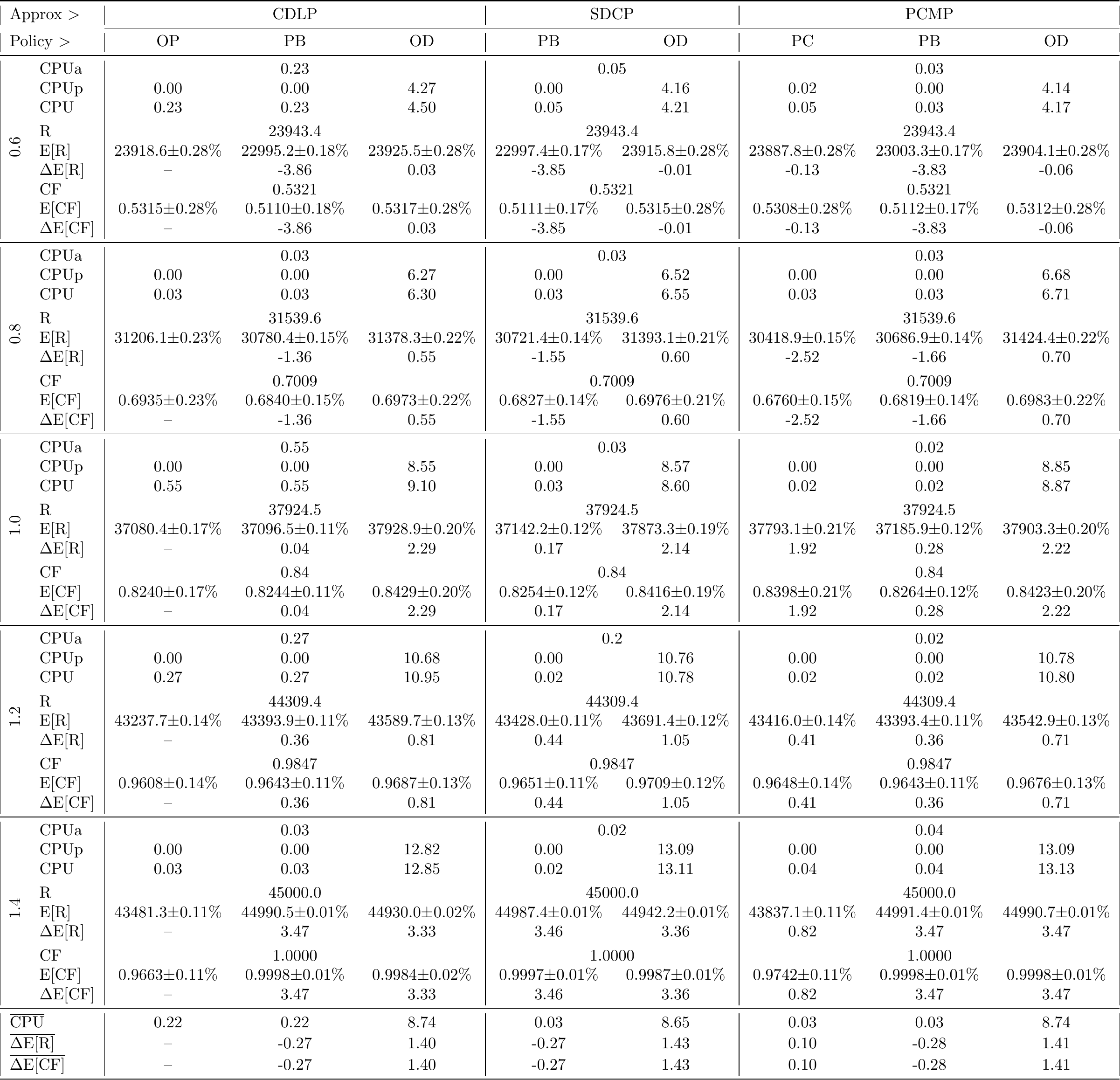}
\caption{Parallel flights results \label{productClosing:ec:tab:parallelFlights}}
{Each approximation is solved in \textbf{CPUa} seconds and return an optimal revenue \textbf{R} corresponding to a capacity factor \textbf{CF}. 
We then transform this solution to policy(ies). 
This transformation takes \textbf{CPUp} seconds and is then simulated in a discrete arrivals simulation with \textbf{3000} evaluations to obtain an expected revenue \textbf{E[R]} and expected capacity factor \textbf{E[CF]} for a 95\% confidence interval.
The total running time is \textbf{CPU} and we calculate \textbf{$\Delta$E[CF]} and \textbf{$\Delta$E[R]} the capacity factor and expected revenue relative difference with respect to CDLP-OP.}
\end{table}

\clearpage
\section{Bus-line}
\label{productClosing:ec:sec:busLine}

\begin{table}[h!]
\centering
\includegraphics[max width=\textwidth]{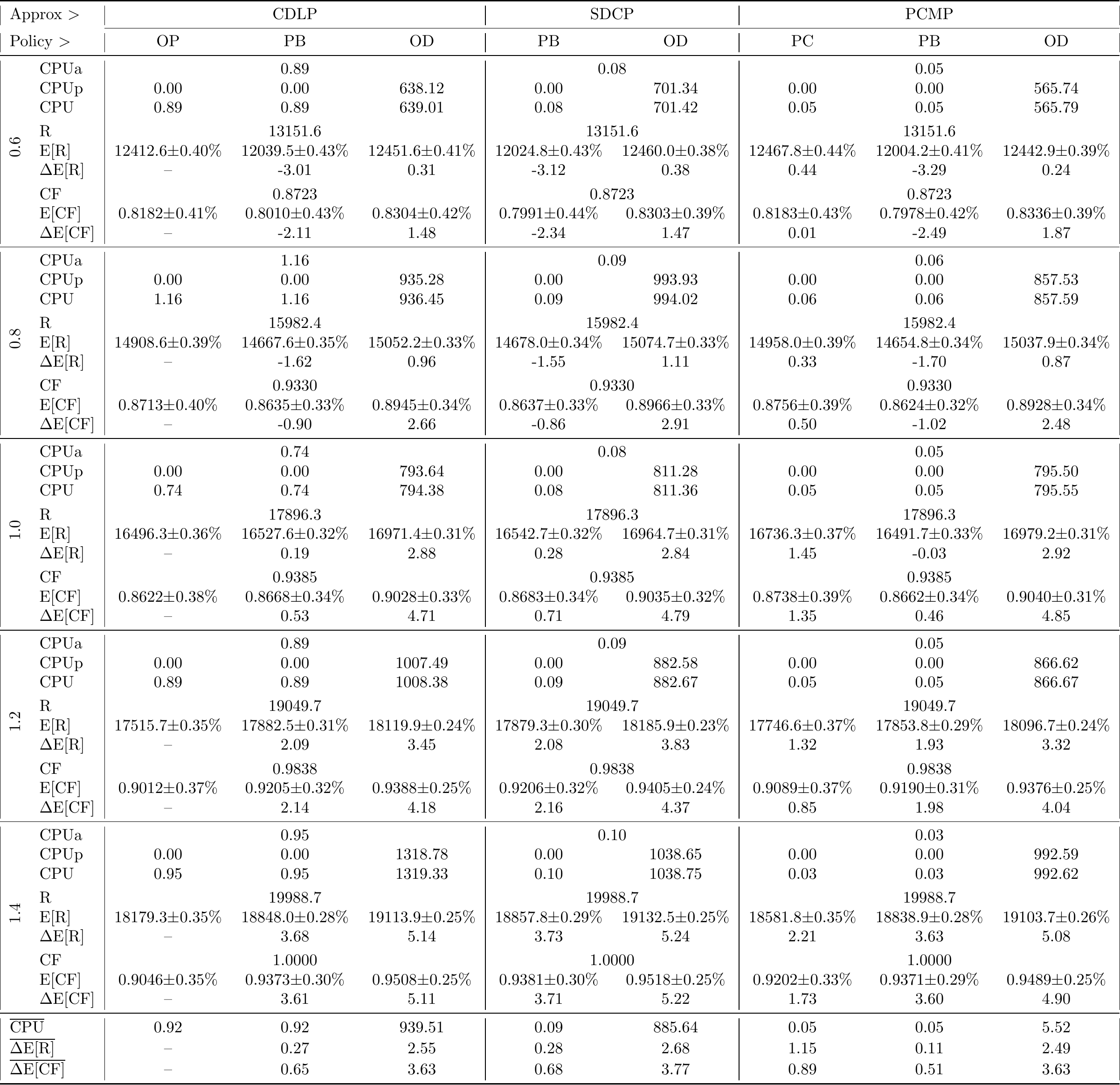}
\caption{Bus-line results \label{productClosing:ec:tab:busLine}}
{Each approximation is solved in \textbf{CPUa} seconds and return an optimal revenue \textbf{R} corresponding to a capacity factor \textbf{CF}. 
We then transform this solution to policy(ies). 
This transformation takes \textbf{CPUp} seconds and is then simulated in a discrete arrivals simulation with \textbf{1000} evaluations to obtain an expected revenue \textbf{E[R]} and expected capacity factor \textbf{E[CF]} for a 95\% confidence interval.
The total running time is \textbf{CPU} and we calculate \textbf{$\Delta$E[CF]} and \textbf{$\Delta$E[R]} the capacity factor and expected revenue relative difference with respect to CDLP-OP.}
\end{table}

\begin{table}[h!]
	\centering
\includegraphics[max width=\textwidth]{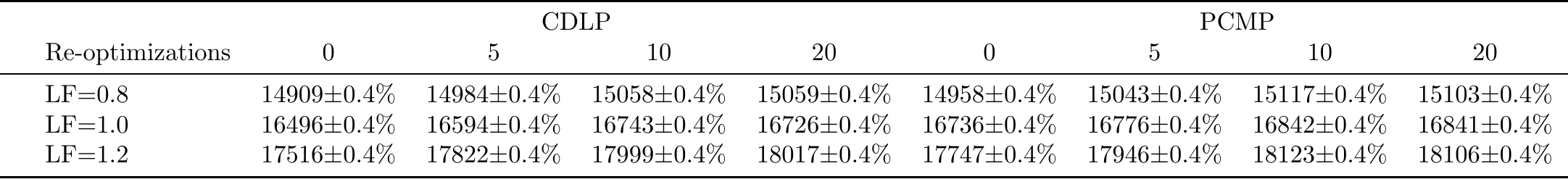}		
	\caption{Expected revenues with re-optimizations \label{productClosing:ec:tab:reopt}}
\end{table}

\clearpage
\section{Airline}
\label{productClosing:ec:sec:airline}

\begin{table}[h!]
\includegraphics[max width=\textwidth]{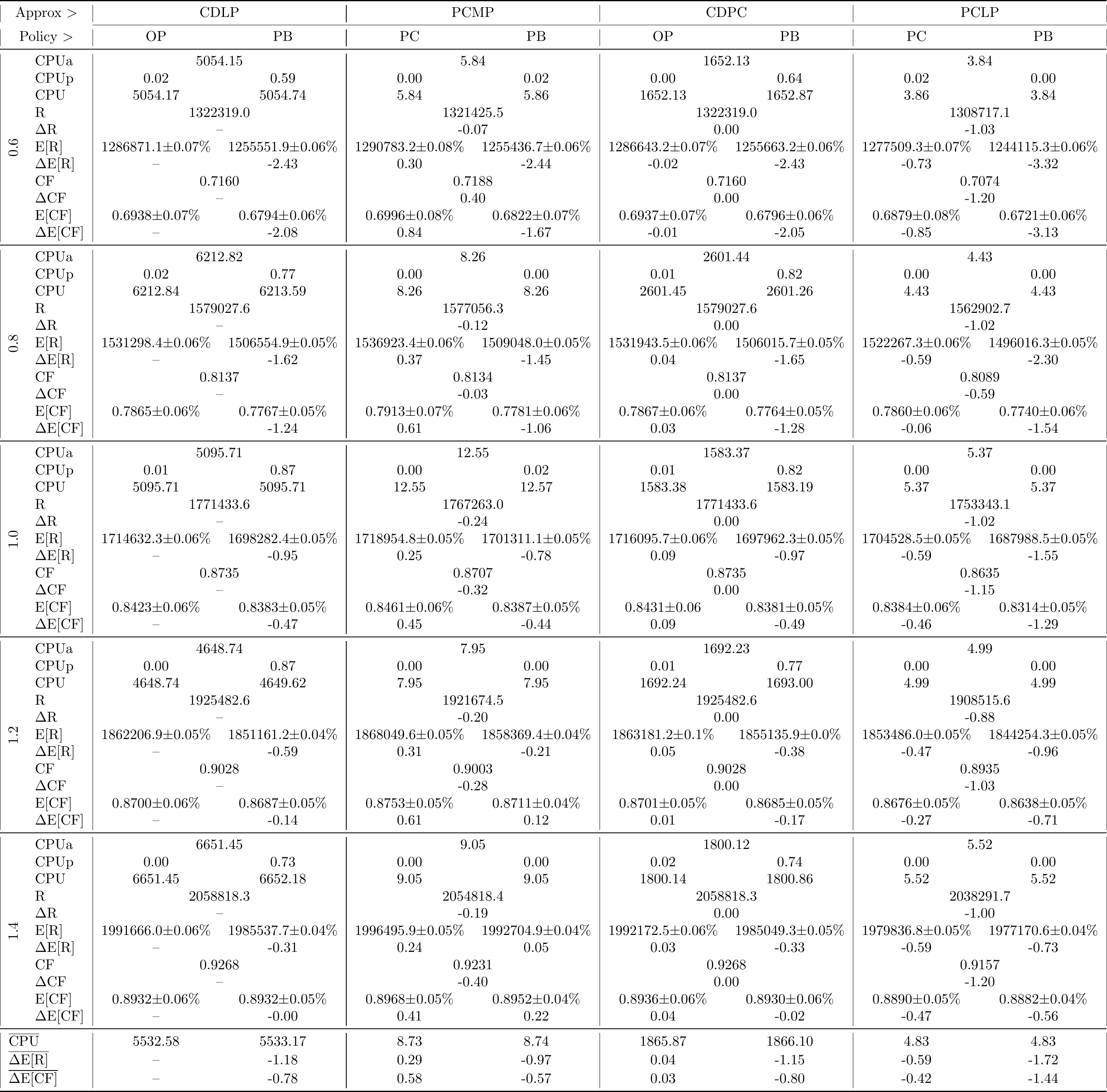}
\caption{Bus-line results \label{productClosing:ec:tab:airline}}
{Each approximation is solved in \textbf{CPUa} seconds and return a revenue \textbf{R} corresponding to a capacity factor \textbf{CF}. 
We then transform this solution to policy(ies). 
This transformation takes \textbf{CPUp} seconds and is then simulated in a discrete arrivals simulation with \textbf{500} evaluations to obtain an expected revenue \textbf{E[R]} and expected capacity factor \textbf{E[CF]} for a 95\% confidence interval.
The total running time is \textbf{CPU} and we calculate \textbf{$\Delta$E[CF]} and \textbf{$\Delta$E[R]} the capacity factor and expected revenue relative difference with respect to CDLP-OP.
}
\end{table}

\end{document}